\theoremstyle{plain}
\newtheorem{theorem}{Theorem}[section]
\newtheorem{corollary}[theorem]{Corollary}
\newtheorem{lemma}[theorem]{Lemma}
\newtheorem{prop}[theorem]{Proposition}
\theoremstyle{definition}
\theoremstyle{remark}
\newcommand{\bbZ}{\mathbb{Z}}
\newcommand{\bbR}{\mathbb{R}}
\newcommand{\bbD}{\mathbb{D}}
\newcommand{\bbN}{\mathbb{N}}
\newcommand{\bbP}{\mathbb{P}}
\newcommand{\mca}{\mathcal{A}}
\newcommand{\mcb}{\mathcal{B}}
\newcommand{\mcf}{\mathcal{F}}
\newcommand{\mcz}{\mathcal{Z}}
\newcommand{\mcn}{\mathcal{N}}
\newcommand{\mcp}{\mathcal{P}}
\newcommand{\mmu}{(M_{\mu})}
\newcommand{\nri}{n\rightarrow\infty}
\newcommand{\jri}{j\rightarrow\infty}
\newcommand{\kri}{k\rightarrow\infty}
\DeclareMathOperator*{\supp}{supp}
\begin{document}
\title[ ] {Relative Asymptotics for General Orthogonal Polynomials}

\bibliographystyle{plain}

\thanks{  }

\maketitle

\begin{center}
\textbf{Brian Simanek}\\
\textit{\small{Baylor University Department of Mathematics\\
One Bear Place $\#$97328, Waco, TX 76798}}\\
\small{Brian$\_$Simanek@baylor.edu}
\end{center}

\begin{abstract}
In this paper, we study right limits of the Bergman Shift matrix.  Our results have applications to ratio asymptotics, weak asymptotic measures, relative asymptotics, and zero counting measures of orthogonal and orthonormal polynomials.  Of particular interest are the applications to random orthogonal polynomials on the unit circle and real line.
\end{abstract}

\section{Introduction}\label{intro}

Let $\mu$ be a positive probability measure with compact and infinite support in the complex plane.  Given such a measure, it is well-known how one forms the sequence of orthonormal polynomials for this measure, which we denote by $\{p_n(z;\mu)\}_{n\geq0}$ and satisfy
\[
\int p_n(z;\mu)\overline{p_m(z;\mu)}d\mu(z)=\delta_{m,n}.
\]
The polynomial $p_n(z;\mu)$ has positive leading coefficient, which we denote by $\kappa_n(\mu)$.  The monic polynomial $\kappa_n(\mu)^{-1}p_n(z;\mu)$ will be denoted by $P_n(z;\mu)$ and is of interest in its own right.

It is both interesting and informative to try to understand the relationship between the measure $\mu$ and the corresponding sequence of orthonormal polynomials.  There are many ways that one can study the orthogonal polynomial asymptotics.  One of the most interesting kinds of asymptotic behavior is known as \textit{ratio asymptotics}, and concerns the following limits:
\begin{align}
\label{rd2}\lim_{\nri}\frac{p_{n-1}(z;\mu)}{p_{n}(z;\mu)}\\
\label{ratiodef}\lim_{\nri}\frac{P_{n-1}(z;\mu)}{P_{n}(z;\mu)}
\end{align}
if these limits exist.  When studying ratio asymptotics, one is concerned with the existence of the limit, the domain of the limit, and a functional form of the limit.  

A second kind of asymptotic behavior is known as \textit{weak asymptotic measures}, and concerns the weak limits of the measures $\{|p_n(z;\mu)|^2d\mu(z)\}_{n\in\bbN}$ as $\nri$.  Since $\mu$ has compact support, weak limits always exist and one can gain insight into properties of the measure $\mu$ by understanding these weak limits.  In some cases, it is more practical to study the weak asymptotic moments, that is, to understand the moments of all the weak asymptotic measures.  In general, the moments of a measure do not determine the measure, so this is a less precise object to study, but we will see that we can make precise statements about these moments even when we cannot identify the measure.  If each of the following limits exists:
\[
\lim_{\nri}\int z^j|p_n(z;\mu)|^2d\mu(z),\qquad j\in\bbN,
\]
then we will say that the measure $\mu$ exhibits \textit{weak asymptotic moments}.

A third way to study orthogonal polynomial asymptotics is to study the asymptotic distribution of the zeros of $P_n$ as $n$ becomes large.  Let $\sigma_n(\mu)$ be the probability measure that assigns weight $n^{-1}$ to each zero of $P_n$, where we count each zero a number of times equal to its multiplicity as a zero.  If each of the following limits exists:
\[
\lim_{\nri}\int z^jd\sigma_n(z),\qquad j\in\bbN,
\]
then we will say that the measure $\mu$ exhibits \textit{Ces\`{a}ro weak asymptotic moments}.

In order to understand precisely when each of these asymptotic behaviors can be observed, we need to consider the Bergman Shift operator.  For a compactly supported measure $\mu$, the Bergman Shift is a bounded operator on $L^2(\mu)$ that maps $f(z)$ to $zf(z)$.  If we let $\mcp(\mu)$ denote the closure of the polynomials in $L^2(\mu)$, then it is clear that the Bergman Shift maps $\mcp(\mu)$ to itself.  If we use the orthonormal polynomials for the measure $\mu$ as a basis for $\mcp(\mu)$, then the orthogonality condition implies that the resulting matrix representation of the Bergman Shift is a Hessenberg matrix:
\[
M=\begin{pmatrix}
M_{11} & M_{12} & M_{13} & M_{14} & \cdots\\
M_{21} & M_{22} & M_{23} & M_{24} & \cdots\\
0 & M_{32} & M_{33} & M_{34} & \cdots\\
0 & 0 & M_{43} & M_{44} & \cdots\\
0 & 0 & 0 & M_{54} & \cdots\\
\vdots & \vdots & \vdots & \vdots & \ddots
\end{pmatrix}
\]
In some cases, we will use the notation $M_{\mu}$ to emphasize the measure that gives rise to this matrix.  If the support of $\mu$ is a compact subset of the real line, then the matrix $M$ is often called a Jacobi matrix.  In this case, the matrix $M_{\mu}$ is symmetric, its entries are zero away from the three main diagonals, and the matrix entries can be given explicitly in terms of the coefficients appearing in the three term recurrence relation satisfied by the corresponding orthonormal polynomials (see \cite{Rice}).  If the support of $\mu$ is a subset of the unit circle, then $M$ is sometimes called the GGT matrix for the measure $\mu$ (see \cite[Section 4.1]{OPUC1}) and its entries can be described explicitly in terms of the recursion coefficients for the corresponding monic orthogonal polynomials.

The structure of the matrix $M$ for general $\mu$ is more difficult to describe.  Some interesting properties appear in \cite{Shift,RatioHessI,RatioHessII}, all of which focus on the relationship between properties of the measure and the entries of $M$ as we move along one of its diagonals.  We will continue this trend of using the limiting behavior of the entries of $M$ along its diagonals to deduce properties of the measure $\mu$ and the corresponding orthonormal polynomials.

In \cite{RatioHessI}, the author provides a precise description of when the monic orthogonal polynomials and orthonormal polynomials for $\mu$ exhibit ratio asymptotics.  The result is stated as a condition on the entries of the Bergman Shift matrix $M_{\mu}$.  Our first main goal in the present work is to prove a relative version of that result, so we must introduce several notions of relative asymptotics.

For any compactly supported measure $\mu$, let us define $R_{\mu}:=\sup\{|z|:z\in\supp(\mu)\}$.  We will say that two measures $\mu$ and $\nu$ exhibit \textit{relative ratio asymptotics} if there is a non-negative integer $q$ so that
\begin{align}\label{relratdef}
\lim_{\nri}\left(\frac{P_{n-1}(z;\mu)}{P_{n}(z;\mu)}-\frac{P_{n-q-1}(z;\nu)}{P_{n-q}(z;\nu)}\right)=0,\qquad |z|>\max\{R_{\mu},R_{\nu}\}.
\end{align}
We will say that two measures $\mu$ and $\nu$ exhibit \textit{relative weak asymptotic moments} if there is a non-negative integer $q$ so that
\begin{align}\label{jdiff}
\lim_{\nri}\left(\int z^j|p_n(z;\mu)|^2d\mu(z)-\int z^j|p_{n-q}(z;\nu)|^2d\nu(z)\right)=0,\qquad j\in\bbN.
\end{align}
We will say that two measures $\mu$ and $\nu$ exhibit \textit{relative Ces\`{a}ro weak asymptotic moments} if
\begin{align}\label{jdiff2}
\lim_{\nri}\left(\int z^jd\sigma_n(\mu)(z)-\int z^jd\sigma_n(\nu)(z)\right)=0,\qquad j\in\bbN.
\end{align}
A different, but related notion of relative asymptotics is discussed in \cite{MNT2}.

Results on the existence of relative ratio asymptotics can be found in \cite[Theorem 3.7]{Beck}, \cite[Theorem 6]{RobSant}, and \cite[Theorem 1.5]{GolZlat}.  While our main result on relative ratio asymptotics is similar in both content and proof to \cite[Theorem 3.7]{Beck}, we make the important generalization that one can take $n$ to infinity along subsequences (which was observed in \cite{RobSant}) and even use different subsequences for each measure.

For a given measure $\mu$, and any $j,n\in\bbN_0:=\bbN\cup\{0\}$, let us define $h_{j,n}(\mu)$ by
\[
h_{j,n}(\mu):=\frac{\kappa_{n-1-j}(\mu)}{\kappa_{n-1}(\mu)}(M_{\mu})_{n-j,n}.
\]
Now we can state our main result concerning relative ratio asymptotics.

\begin{theorem}\label{iffrelrat}
Let $\mu$ and $\nu$ be two compactly supported and finite measures and let $q\in\bbN_0$ be fixed.  We have
\begin{align}\label{shiftseq}
\lim_{\nri}\left(\frac{P_{n-1}(z;\mu)}{P_{n}(z;\mu)}-\frac{P_{n-q-1}(z;\nu)}{P_{n-q}(z;\nu)}\right)=0
\end{align}
if and only if for every $j\in\bbN_0$ it holds that
\begin{align}\label{hcond}
\lim_{\nri}\left(h_{j,n}(\mu)-h_{j,n-q}(\nu)\right)=0.
\end{align}
\end{theorem}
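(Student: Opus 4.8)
The plan is to reduce both implications to a single functional equation linking the monic ratios $f_n(z;\mu):=P_{n-1}(z;\mu)/P_n(z;\mu)$ to the quantities $h_{j,n}(\mu)$. Expanding $zp_{n-1}$ in the orthonormal basis gives the Hessenberg column relation $zp_{n-1}=\sum_{i=1}^{n+1}(M_\mu)_{in}\,p_{i-1}$; rewriting this in monic polynomials via $p_m=\kappa_m P_m$, using $(M_\mu)_{n+1,n}=\kappa_{n-1}/\kappa_n$, dividing by $\kappa_{n-1}P_n$ and reindexing $i=n-j$, I obtain
\begin{equation}\label{fe}
z f_n(z;\mu)=1+\sum_{j=0}^{n-1}h_{j,n}(\mu)\prod_{k=0}^{j}f_{n-k}(z;\mu).
\end{equation}
Next I would record the analytic input. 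Since $P_n(z;\mu)=\det(zI_n-M_n)$ for the $n\times n$ principal truncation $M_n$ of $M_\mu$, Cramer's rule gives $f_n(z;\mu)=[(zI_n-M_n)^{-1}]_{n,n}$, and because $M_n$ is a compression of multiplication by $z$ we have $\|M_n\|\le R_\mu$, whence
\[
|f_n(z;\mu)|\le\frac{1}{|z|-R_\mu},\qquad |z|>R_\mu.
\]
Thus the $f_n(\cdot;\mu)$ form a normal family on $\{|z|>R_\mu\}$, and (using the entrywise bound $|h_{j,n}(\mu)|\le R_\mu^{\,j+1}$) for $|z|>2R_\mu$ the $j$-th term of \eqref{fe} is dominated by $\bigl(R_\mu/(|z|-R_\mu)\bigr)^{j+1}$, so the series converges with a summable tail that is uniform in $n$.

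For the ``only if'' direction I would read off Laurent coefficients at $\infty$. Writing $f_n(z;\mu)=\sum_{m\ge1}a_m^{(n)}(\mu)z^{-m}$ with $a_1^{(n)}\equiv1$ and matching the coefficient of $z^{-m}$ in \eqref{fe}, one gets a triangular system expressing each $h_{j,n}(\mu)$ as a fixed universal polynomial in the finitely many coefficients $\{a_\ell^{(n-k)}(\mu):0\le k\le j,\ \ell\le j+2\}$ (the top term being $h_{j,n}(\mu)$ itself with coefficient $1$). Now \eqref{shiftseq}, applied with $n$ replaced by $n-k$ for each fixed $k$, forces $f_{n-k}(\cdot;\mu)-f_{n-q-k}(\cdot;\nu)\to0$ near $\infty$, hence $a_\ell^{(n-k)}(\mu)-a_\ell^{(n-q-k)}(\nu)\to0$ for all $\ell,k$. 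Since these coefficients are bounded and the universal polynomial is continuous, \eqref{hcond} follows.

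The substantive direction is ``if''. Assuming \eqref{hcond} (and hence its shifts $h_{j,n-k}(\mu)-h_{j,n-q-k}(\nu)\to0$), suppose toward a contradiction that \eqref{shiftseq} fails at some $z_0$ with $|z_0|>2R$, where $R:=\max\{R_\mu,R_\nu\}$. By normality and a diagonal extraction I pass to a subsequence along which $f_{n-k}(\cdot;\mu)\to F_k$, $f_{n-q-k}(\cdot;\nu)\to G_k$ and $h_{j,n-k}(\mu)\to H_{j,k}$ locally uniformly for all $j,k$; the shift hypothesis gives $h_{j,n-q-k}(\nu)\to H_{j,k}$ as well. Passing to the limit in \eqref{fe} and in its shifts, with the uniform summable tail justifying the interchange of limit and sum, shows that $\{F_k\}$ and $\{G_k\}$ satisfy the \emph{same} infinite system
\[
zF_k=1+\sum_{j\ge0}H_{j,k}\prod_{i=0}^{j}F_{k+i},\qquad k\ge0,
\]
and identically for $\{G_k\}$. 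The key step is uniqueness: matching Laurent coefficients in this limit system yields the same triangular recursion as before, which determines every coefficient of every $F_k$ from the data $\{H_{j,k}\}$ alone; hence $F_k=G_k$ for all $k$, contradicting $|F_0(z_0)-G_0(z_0)|\ge\delta>0$. This proves \eqref{shiftseq} on $\{|z|>2R\}$, and a final Vitali/normal-family argument extends it to $\{|z|>R\}$.

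I expect the main obstacle to be this uniqueness step together with the justification for interchanging the limit with the infinite sum: one must verify that the triangular Laurent recursion genuinely propagates across the coupled system indexed by $k$ (so that no ``initial data'' beyond the $H_{j,k}$ is needed), and that the resolvent tail bound is strong enough to pass to the limit uniformly. The remaining bookkeeping — the functional equation \eqref{fe}, the resolvent estimate, and the reindexing of shifts — is routine.
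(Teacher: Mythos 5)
Your proof is correct, and it overlaps substantially with the paper's argument while differing in two notable respects. The paper also works entirely with Laurent coefficients at infinity, but it obtains them from the resolvent identity $P_{n-1}(z;\mu)/P_n(z;\mu)=\sum_{m\geq0}\left((\pi_nM_{\mu}\pi_n)^m\right)_{n,n}z^{-m-1}$ and imports the key combinatorial fact as Lemma \ref{hprod}: the $m$-th coefficient equals $h_{m-1,n}(\mu)$ plus a universal polynomial in the shifted quantities $h_{j,n-k}(\mu)$ with $j,k\leq m-2$. Your functional equation $zf_n=1+\sum_{j}h_{j,n}\prod_{k=0}^{j}f_{n-k}$ is precisely the generating-function form of that lemma --- matching its Laurent coefficients reproduces it --- so your ``only if'' direction coincides in substance with the paper's induction, with the benefit of being self-contained rather than quoting \cite{RatioHessI} or \cite{Beck}. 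The genuine divergence is in the ``if'' direction: the paper simply substitutes, noting that each Laurent coefficient is the same fixed polynomial in the bounded quantities $h_{j,n-k}$ for $\mu$ as for $\nu$, so the hypothesis \eqref{hcond} (and its trivial index shifts) forces each coefficient of the difference to vanish in the limit, after which normality finishes; you instead extract subsequential limits $F_k$, $G_k$, $H_{j,k}$ and prove uniqueness for the coupled limit system. Your route is valid --- the recursion does determine every coefficient from $\{H_{j,k}\}$ and the seeds $A_1^{(k)}\equiv1$ by induction on the Laurent index carried out simultaneously in $k$, and your bounds $|h_{j,n}|\leq R_{\mu}^{j+1}$ and $|f_n(z)|\leq(|z|-R_{\mu})^{-1}$ justify the term-by-term passage to the limit for $|z|>2R$ --- but it is heavier machinery than needed: running your own triangular bookkeeping ``forwards'' at finite $n$ (expressing the Laurent coefficients as universal polynomials in the $h_{j,n-k}$'s) gives the paper's direct argument and eliminates the compactness and uniqueness steps entirely. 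Both proofs then conclude identically, upgrading convergence near infinity to all of $\{z:|z|>\max\{R_{\mu},R_{\nu}\}\}$ by a normal-families (Vitali) argument.
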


One might be able to obtain Theorem \ref{iffrelrat} as a direct consequence of \cite[Theorem 3.7]{Beck} or \cite[Theorem 6]{RobSant}, at least in the case $q=0$.  However, the setting we consider is slightly different than in both of those papers, so we will include a proof in Section \ref{prf} for completeness.

It is also meaningful to discuss a relative version of the relation (\ref{rd2}).  Indeed, if  $\{n_k\}_{k=1}^{\infty}=\mcn(\mu)\subseteq\bbN$ and $\{m_k\}_{k=1}^{\infty}=\mcn(\nu)\subseteq\bbN$ are two subsequences, then we will say that two measures $\mu$ and $\nu$ exhibit \textit{normalized relative ratio asymptotics through $\mcn(\mu)$ and $\mcn(\nu)$} precisely when
\begin{align}\label{nrel}
\lim_{\kri}\left(\frac{p_{n_k+j-1}(z;\mu)}{p_{n_k+j}(z;\mu)}-\frac{p_{m_k+j-1}(z;\nu)}{p_{m_k+j}(z;\nu)}\right)=0,\qquad |z|>\max\{R_{\mu},R_{\nu}\},\qquad j\in\bbZ.
\end{align}
If $\mcn(\mu)=\mcn(\nu)=\bbN$, then we will say $\mu$ and $\nu$ exhibit \textit{normalized relative ratio asymptotics}.  A normal families argument implies that the convergence in (\ref{nrel}) is always uniform on compact subsets of $\{z:|z|>\max\{R_{\mu},R_{\nu}\}\}$.

Existence of the limit (\ref{nrel}) is a stronger conclusion than the existence of the limit (\ref{relratdef}) because it assumes some convergence of the ratio of consecutive leading coefficients.  Thus, normalized relative ratio asymptotics will be equivalent to a stronger statement than (\ref{hcond}).  To state this condition, we must recall the definition of a right limit of a matrix (see \cite{RatioHessII}).

For a matrix $A$ whose rows and columns are indexed by the natural numbers, let $A_n^{(m)}$ be the $2m+1\times2m+1$ sub-matrix of $A$ centered at $A_{n,n}$.  We will say that a matrix $X$ is a right limit of the matrix $A$ if there is a subsequence $\mcn\subseteq\bbN$ so that for every $m\in\bbN$ it holds that
\[
\lim_{{\nri}\atop{n\in\mcn}}A_n^{(m)}=X_0^{(m)}.
\]
In this case, we will also say that $A$ approaches $X$ through $\mcn$.  If the matrix elements of $A$ are bounded (which they are in the cases we are considering), then one can always find right limits.  Also, notice that right limits are bi-infinite, even though the matrix $A$ is not.  In the context of orthogonal polynomials on the real line, right limits have been used extensively for asymptotic calculations (see \cite{Rice,VA95} and references therein).  For more on right limits, we refer the reader to \cite{RatioHessII} and \cite[Chapter 7]{Rice}.

The recent publications \cite{RatioHessI,RatioHessII} shed some light on the relationship between right limits of the matrix $M_{\mu}$ and ratio asymptotics for general orthonormal polynomials under the added condition that
\begin{align}\label{kapo}
\liminf_{\nri}\frac{\kappa_n}{\kappa_{n+1}}>0.
\end{align}
Note that this is a very mild condition that is satisfied by many measures of interest including area measure on a smooth Jordan region \cite{Shift} and measures on the unit circle with non-vanishing absolutely continuous component \cite{OPUC1}.  Using essentially the same techniques as \cite{RatioHessI,RatioHessII}, we will show the following result (recall the definition of $R_{\mu}$ given before Equation (\ref{relratdef})):

\begin{theorem}\label{iffrlim}
Let $\mu$ be a finite measure with compact and infinite support so that (\ref{kapo}) is satisfied.  If $\mcn\subseteq\bbN$ is a subsequence, then the following statements are equivalent:
\begin{itemize}
\item[I)]  There is a sequence of functions $\{f_j\}_{j\in\bbZ}$, each of which is analytic in $\{z:R_{\mu}<|z|\leq\infty\}$ so that for all $j\in\bbZ$ it is true that
\begin{align}\label{fjdef}
\lim_{{\nri}\atop{n\in\mcn}}\frac{p_{n+j-1}(z;\mu)}{p_{n+j}(z;\mu)}=f_j(z),\qquad |z|>R_{\mu},
\end{align}
where the convergence is uniform on the exterior of $\{z:|z|>r\}$ for every $r>R_{\mu}$.
\item[II)]  The matrix $M_{\mu}$ approaches a right limit as $\nri$ through $\mcn$.
\end{itemize}
The implication (II) implies (I) is valid even if condition (\ref{kapo}) is not satisfied.
\end{theorem}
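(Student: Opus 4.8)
The plan is to connect the ratios $p_{m-1}/p_m$ to corner entries of the resolvent of finite truncations of $M_{\mu}$ and then to exploit the locality forced by the (upper) Hessenberg structure. Write $M^{[m]}$ for the top-left $m\times m$ truncation of $M_{\mu}$. Since the monic orthogonal polynomial $P_m$ is the characteristic polynomial of $M^{[m]}$ (a standard fact; see \cite{Rice}), deleting row and column $m$ and applying Cramer's rule gives the identity $\frac{p_{m-1}(z)}{p_m(z)}=\frac{\kappa_{m-1}}{\kappa_m}\,\big[(zI-M^{[m]})^{-1}\big]_{mm}$, where the prefactor $\kappa_{m-1}/\kappa_m$ is exactly the subdiagonal entry $(M_{\mu})_{m+1,m}$. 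For $|z|>R_{\mu}\ge\|M_{\mu}\|\ge\|M^{[m]}\|$ I would expand the corner resolvent as a Neumann series $\big[(zI-M^{[m]})^{-1}\big]_{mm}=\sum_{k\ge0}z^{-k-1}\tau_k^{(m)}$, whose coefficients are the corner moments $\tau_k^{(m)}:=\big[(M^{[m]})^k\big]_{mm}$. The norm bound $\|M^{[m]}\|\le\|M_{\mu}\|\le R_{\mu}$ (compressions do not increase the norm, and multiplication by $z$ has norm at most $R_{\mu}$) makes this series converge on all of $\{|z|>R_{\mu}\}$.

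The structural heart of the argument is that $\tau_k^{(m)}$ is a sum over length-$k$ paths based at the corner $m$, and that a nonzero entry of an upper Hessenberg matrix forces the index to drop by at most one per step; hence a path can reach index $m-t$ only when $k\ge t+1$, so $\tau_k^{(m)}$ depends only on entries within distance $k$ of the corner. Granting (II), those corner entries converge to the corresponding entries of the right limit $X$ as $\nri$ through $\mcn$ with the corner placed at $m=n+j$, so each $\tau_k^{(n+j)}$ converges; the Neumann series is dominated by a geometric series uniformly on $\{|z|\ge r\}$ for every $r>R_{\mu}$, so the corner resolvents converge uniformly there to the analogous corner resolvent of the semi-infinite truncation of $X$. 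Multiplying by the convergent subdiagonal entry yields the analytic limit $f_j$, giving (I). This direction never divides by a subdiagonal entry, which is why it does not require (\ref{kapo}).

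For the converse I would run this backwards. Uniform convergence of $p_{n+j-1}/p_{n+j}$ to $f_j$ on a circle $|z|=\rho>R_{\mu}$ forces every Laurent coefficient to converge; since the coefficient of $z^{-(k+1)}$ in $p_{m-1}/p_m$ equals $(M_{\mu})_{m+1,m}\,\tau_k^{(m)}$, the case $k=0$ already shows that the subdiagonal entries $(M_{\mu})_{m+1,m}=\kappa_{m-1}/\kappa_m$ converge along the relevant subsequence. Here I would invoke (\ref{kapo}): together with this convergence it guarantees that the limiting subdiagonal entries are strictly positive, so I may divide and conclude that every corner moment $\tau_k^{(m)}$ converges. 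I would then peel off the matrix entries one superdiagonal at a time by induction on $t$: the path count shows that $\tau_{t+1}^{(m)}$ contains $(M_{\mu})_{m-t,m}$ linearly, with coefficient the telescoping product of subdiagonal entries $\prod_{s=0}^{t-1}(M_{\mu})_{m-s,m-s-1}=\kappa_{m-t-1}/\kappa_{m-1}$, plus terms built solely from entries on earlier superdiagonals that are already known to converge; dividing by that product (positive in the limit, again by (\ref{kapo})) shows $(M_{\mu})_{m-t,m}$ converges. Letting $j$ range over $\bbZ$ and $t$ over $\bbN_0$ recovers convergence of every windowed entry of $M_{\mu}$, which is precisely (II).

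The main obstacle is this final peeling step: disentangling the individual entries from the symmetric-looking corner moments. The combinatorial fact that the farthest entry in $\tau_{t+1}^{(m)}$ enters linearly, with a coefficient that is a product of subdiagonal entries, is exactly what makes the induction go through, and it is here that (\ref{kapo}) is indispensable, since without a positive lower bound on the subdiagonal entries the required divisions degenerate and the equivalence can fail. The remaining ingredients, namely the determinant/resolvent identity, the norm bound ensuring convergence on all of $\{|z|>R_{\mu}\}$, and the normal-families bookkeeping behind the uniform convergence, are routine.
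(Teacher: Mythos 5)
Your proposal is correct and follows essentially the same route as the paper's own proof: the Cramer/resolvent identity expressing the ratio as a corner entry of the truncated resolvent, the Neumann expansion whose Laurent coefficients are corner moments, convergence of those coefficients, and an induction that peels off one superdiagonal at a time by dividing by telescoping products of subdiagonal entries, which is exactly where (\ref{kapo}) enters. Your path-counting analysis of the corner moments is precisely the content of the paper's Lemma \ref{hprod} (which the paper cites rather than re-derives), so the two arguments differ only in bookkeeping, not in substance.
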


\noindent\textit{Remark.}  The proof of Theorem \ref{iffrlim} will make it clear that if the condition (II) of Theorem \ref{iffrlim} applies to a measure $\mu$, then the right limit of $M_{\mu}$ determines the sequence $\{f_j\}_{j\in\bbZ}$.  If condition (\ref{kapo}) is satisfied, then the sequence $\{f_j\}_{j\in\bbZ}$ in condition (I) determines the right limit also.

\vspace{2mm}

Theorem \ref{iffrlim} and the above remark immediately provide the following characterization of pairs of measures that exhibit normalized relative ratio asymptotics.

\begin{corollary}\label{genrelrat}
Let $\mu$ and $\nu$ be two positive and finite measures, each having compact and infinite support and satisfying (\ref{kapo}).  Furthermore, let $\{n_k\}_{k=1}^{\infty}=\mcn(\mu)\subseteq\bbN$ and $\{m_k\}_{k=1}^{\infty}=\mcn(\nu)\subseteq\bbN$ be two subsequences.  The following statements are equivalent:
\begin{itemize}
\item[i)]  The measures $\mu$ and $\nu$ exhibit normalized relative ratio asymptotics through $\mcn(\mu)$ and $\mcn(\nu)$.
\item[ii)]  $M_{\mu}$ approaches a right limit $X$ through $\{n_k\}_{k\in\mcn^*\subseteq\bbN}$ if and only if $M_{\nu}$ approaches $X$ through $\{m_k\}_{k\in\mcn^*}$.
\end{itemize}
In particular, the measures $\mu$ and $\nu$ exhibit normalized relative ratio asymptotics if and only if the unique right limit of $M_{\mu}-M_{\nu}$ is $0$.  The implication (ii) implies (i) is valid even if the condition (\ref{kapo}) is not satisfied.
\end{corollary}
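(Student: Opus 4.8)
The plan is to use Theorem \ref{iffrlim} together with its accompanying remark as a dictionary that translates between right limits of a Hessenberg matrix and the family of ratio limits $\{f_j\}_{j\in\bbZ}$, and then to run a compactness argument over subsequences. The crucial feature I will exploit is that this correspondence is \emph{local at infinity} in both directions: a right limit $X$ of $M_{\mu}$ determines each $f_j$ via the implication (II)$\Rightarrow$(I) (which needs no hypothesis), while under (\ref{kapo}) the germs of the $f_j$ at $z=\infty$ conversely determine $X$. Since the $f_j$ are built from $X$ alone and not from the measure, the \emph{same} matrix $X$ produces the \emph{same} functions $f_j$ whether it arises from $M_{\mu}$ or from $M_{\nu}$; this is exactly what lets me compare the two measures. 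Throughout I use that bounded matrices always possess right limits along suitable subsequences, so every subsequence of $\{n_k\}$ (resp. $\{m_k\}$) has a further subsequence along which $M_{\mu}$ (resp. $M_{\nu}$) converges to a right limit.

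For the implication (ii)$\Rightarrow$(i), I would show that the difference in (\ref{nrel}) tends to $0$ along every subsequence by showing it tends to $0$ along an arbitrary sub-subsequence. Given a subsequence of the index $k$, pass to a further subsequence $\mcn^*$ along which $M_{\mu}$ approaches some right limit $X$; this uses only boundedness, not (\ref{kapo}). By hypothesis (ii), $M_{\nu}$ then approaches the same $X$ through $\{m_k\}_{k\in\mcn^*}$. Applying (II)$\Rightarrow$(I) of Theorem \ref{iffrlim} to each measure, both $p_{n_k+j-1}(z;\mu)/p_{n_k+j}(z;\mu)$ and $p_{m_k+j-1}(z;\nu)/p_{m_k+j}(z;\nu)$ converge to the single sequence $\{f_j\}$ determined by $X$, so their difference converges to $0$ on $\{z:|z|>\max\{R_{\mu},R_{\nu}\}\}$. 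Since this holds along every sub-subsequence, the full limit in (\ref{nrel}) is $0$. No use was made of (\ref{kapo}), which is why this implication survives its removal.

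For (i)$\Rightarrow$(ii), by symmetry it suffices to show that if $M_{\mu}$ approaches $X$ through $\{n_k\}_{k\in\mcn^*}$ then $M_{\nu}$ approaches $X$ through $\{m_k\}_{k\in\mcn^*}$. I would again use the subsequence principle: let $\mcn^{**}\subseteq\mcn^*$ be any further subsequence along which $M_{\nu}$ approaches a right limit $Y$, and I aim to prove $Y=X$. By (II)$\Rightarrow$(I) the $\nu$-ratios converge along $\mcn^{**}$ to the functions $g_j$ determined by $Y$ (analytic on $\{R_{\nu}<|z|\le\infty\}$), while the $\mu$-ratios converge along $\mcn^{**}$ to the $f_j$ determined by $X$. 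Hypothesis (i), read along $\mcn^{**}$, forces $f_j=g_j$ on the common exterior $\{z:|z|>\max\{R_{\mu},R_{\nu}\}\}$. The \textbf{main obstacle} is that $f_j$ and $g_j$ a priori live on different annular exteriors, so one cannot literally invoke the remark's assertion that the $f_j$ determine the right limit in order to equate $X$ and $Y$. This is resolved by the identity theorem: $f_j$ and $g_j$ agree on an open neighborhood of $\infty$, hence share the same Laurent germ there, and since under (\ref{kapo}) the reconstruction of the right limit depends only on that germ at infinity, the two reconstructions coincide and $Y=X$. Because every convergent sub-subsequence of $M_{\nu}$ through $\{m_k\}_{k\in\mcn^*}$ thus has limit $X$, and right limits always exist, $M_{\nu}$ approaches $X$ through $\{m_k\}_{k\in\mcn^*}$; this direction uses (\ref{kapo}) for both measures.

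Finally, the displayed special case follows by specializing to $\mcn(\mu)=\mcn(\nu)=\bbN$, so that (ii) asserts that $M_{\mu}$ and $M_{\nu}$ have exactly the same right limits along exactly the same subsequences. I would then note that this is equivalent to saying the only right limit of $M_{\mu}-M_{\nu}$ is $0$: if the centered sub-matrices of $M_{\mu}-M_{\nu}$ tend to $0$, then $M_{\mu}\to X$ along a subsequence forces $M_{\nu}=M_{\mu}-(M_{\mu}-M_{\nu})\to X$ along it, and conversely; while if some right limit of $M_{\mu}-M_{\nu}$ were nonzero, a further subsequence making both $M_{\mu}$ and $M_{\nu}$ converge would produce distinct right limits $X\ne Y$ for the two measures along one common subsequence, contradicting (ii). Hence $\mu$ and $\nu$ exhibit normalized relative ratio asymptotics precisely when $M_{\mu}-M_{\nu}$ has unique right limit $0$.
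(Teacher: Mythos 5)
Your proposal is correct and takes essentially the same route as the paper: both directions are deduced from Theorem \ref{iffrlim} together with its accompanying remark, combined with the compactness fact that bounded matrices always admit right limits along further subsequences. The only differences are organizational --- the paper proves (ii)$\Rightarrow$(i) by contradiction at a single point $z_0$ rather than via your subsequence principle, and for (i)$\Rightarrow$(ii) it applies (I)$\Rightarrow$(II) of Theorem \ref{iffrlim} directly to $\nu$ (implicitly using normality to handle the domain mismatch that you resolve explicitly with the identity theorem and germs at infinity) rather than extracting a convergent sub-subsequence of $M_{\nu}$ and proving uniqueness of its right limit.
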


\noindent\textit{Remark.}  We will see by example in Section \ref{examp} that the hypothesis (\ref{kapo}) is essential for the implication (i) implies (ii) in the theorem.

\vspace{2mm}

Given a compactly supported finite measure with infinitely many points in its support, let $\mca_{\mu}$ be the collection of analytic functions that occur as uniform limits of the sequence $\{p_{n-1}(z;\mu)/p_{n}(z;\mu)\}_{n\in\bbN}$ as $\nri$ through a subsequence (\cite[Proposition 2.2]{RatioHessI} implies that $\mca_{\mu}$ is not empty).  Let us also denote by $\mcb_{\mu}$ the collection of right limits of $M_{\mu}$, and we have already remarked that $\mcb_{\mu}$ is not empty.  Theorem \ref{iffrlim} easily implies the following corollary.

\begin{corollary}\label{samerat}
Let $\mu$ and $\nu$ be two compactly supported and finite measures each having infinite support.  If $\mcb_{\mu}=\mcb_{\nu}$, then $\mca_{\mu}=\mca_{\nu}$.
\end{corollary}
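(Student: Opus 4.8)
The plan is to prove the two inclusions $\mca_\mu\subseteq\mca_\nu$ and $\mca_\nu\subseteq\mca_\mu$ separately; since the hypothesis $\mcb_\mu=\mcb_\nu$ is symmetric in $\mu$ and $\nu$, it suffices to establish the first one. So I fix $f\in\mca_\mu$, meaning there is a subsequence $\mcn\subseteq\bbN$ along which $p_{n-1}(z;\mu)/p_n(z;\mu)\to f(z)$ uniformly on compact subsets of $\{z:|z|>R_\mu\}$. The idea is to route this convergence through a right limit of $M_\mu$, transport that right limit over to $M_\nu$ using $\mcb_\mu=\mcb_\nu$, and then read off the corresponding ratio limit for $\nu$ from Theorem \ref{iffrlim}.

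First I would refine the subsequence. Since $M_\mu$ is the matrix of a bounded operator, its entries are bounded, so it possesses right limits (as noted when right limits were defined). Passing to a further subsequence $\mcn'\subseteq\mcn$, I may therefore assume that $M_\mu$ approaches some right limit $X$ through $\mcn'$. Because $\mcn'\subseteq\mcn$, we still have $p_{n-1}(z;\mu)/p_n(z;\mu)\to f(z)$ through $\mcn'$. Now the implication (II)$\Rightarrow$(I) of Theorem \ref{iffrlim}---which, crucially, does not require the hypothesis (\ref{kapo})---applies and produces functions $\{f_j\}_{j\in\bbZ}$ with $p_{n+j-1}(z;\mu)/p_{n+j}(z;\mu)\to f_j(z)$ through $\mcn'$. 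Taking $j=0$ and invoking uniqueness of limits gives $f_0=f$.

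Next I would transport $X$ to $\nu$. By hypothesis $X\in\mcb_\mu=\mcb_\nu$, so $M_\nu$ approaches $X$ through some subsequence $\mcn''\subseteq\bbN$, with indices denoted $m\in\mcn''$. Applying (II)$\Rightarrow$(I) of Theorem \ref{iffrlim} to $\nu$ produces functions $\{g_j\}_{j\in\bbZ}$ with $p_{m+j-1}(z;\nu)/p_{m+j}(z;\nu)\to g_j(z)$ through $\mcn''$, and in particular $p_{m-1}(z;\nu)/p_m(z;\nu)\to g_0(z)$. This already shows $g_0\in\mca_\nu$, so the whole argument reduces to the identity $g_0=f$.

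The heart of the matter, and the step I expect to require the most care, is exactly this identity $g_0=f_0$. It rests on the Remark following Theorem \ref{iffrlim}: the proof of that theorem exhibits each $f_j$ as a function of the right limit $X$ alone, via a procedure that takes the bi-infinite matrix $X$ as its only input and makes no further reference to the underlying measure. Consequently the same right limit $X$ yields the same $j=0$ function whether it is produced by $M_\mu$ or by $M_\nu$, that is, $g_0=f_0=f$. I would make this explicit by pointing to the measure-independent construction of $f_0$ from $X$ inside the proof of Theorem \ref{iffrlim}, so that no properties of $\mu$ or $\nu$ beyond the common right limit $X$ enter anywhere. With $g_0=f$ in hand we obtain $f\in\mca_\nu$, and the symmetric argument (interchanging the roles of $\mu$ and $\nu$) gives $\mca_\nu\subseteq\mca_\mu$, completing the proof.
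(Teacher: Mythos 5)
Your proof is correct and is essentially the argument the paper intends: the paper simply asserts that Corollary \ref{samerat} "easily" follows from Theorem \ref{iffrlim}, and your write-up is exactly that deduction---refine to a subsequence along which $M_{\mu}$ has a right limit $X$, use the implication (II)$\Rightarrow$(I) (valid without (\ref{kapo})) for both measures, and invoke the Remark that $X$ alone determines the limit functions $\{f_j\}_{j\in\bbZ}$ to conclude $g_0=f$.
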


In the next section, we will discuss some elementary consequences of the results in this section.  In particular, we will be able to prove that relative ratio asymptotics implies relative weak asymptotic moments.  Section \ref{random} discusses various universal properties of random orthogonal polynomials on the unit circle and real line by showing that such polynomials exhibit behavior that is dense in the appropriate spaces.  The proofs of the above results are contained in Section \ref{prf}, while Section \ref{examp} contains some additional examples.

\vspace{4mm}

\noindent\textbf{Acknowledgements}  We thank Nikos Stylianopoulos for bringing the important paper \cite{Beck} to our attention.  We thank the anonymous referees for useful suggestions and especially for their suggestion to consult references \cite{GHVA,GNVA,MNT,ND79,NVA92,VA91,VA95}.  We also thank P. Nevai for useful discussion of previous results and for helping us find the reference \cite{MNT}.

\section{Asymptotic Relationships}\label{asympr}

The relationship between weak asymptotic moments, and Ces\`{a}ro weak asymptotic moments was proven in the context of orthogonal polynomials on the real line in \cite[Theorem 2]{MNT}.  This was generalized in several ways by the author in \cite{RatioHessI}, the results of which are summarized in the following theorem:

\begin{theorem}\label{past}
Let $\mu$ be a finite measure with compact and infinite support.  

\vspace{2mm}

(i) If $\mu$ exhibits weak asymptotic moments, then it exhibits Ces\`{a}ro weak asymptotic moments.

\vspace{2mm}

(ii) If the monic orthogonal polynomials for $\mu$ exhibit ratio asymptotics, then $\mu$ exhibits weak asymptotic moments.
\end{theorem}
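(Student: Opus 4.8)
The plan is to translate both claims into statements about the Bergman Shift matrix $M=M_{\mu}$ and its finite sections; write $M_n$ for the top-left $n\times n$ corner of $M$. I will use three facts. Since multiplication by $z^j$ is represented by $M^j$, the diagonal entries satisfy $(M^j)_{n,n}=\int z^j|p_{n-1}(z;\mu)|^2\,d\mu(z)$, so $\mu$ exhibiting weak asymptotic moments means precisely that each diagonal sequence $\{(M^j)_{n,n}\}_n$ converges. Next, $P_n$ is the characteristic polynomial of $M_n$ (immediate from the Hessenberg recurrence below), so the zeros of $P_n$ are the eigenvalues of $M_n$ and hence $\int z^j\,d\sigma_n(\mu)(z)=\tfrac1n\Tr(M_n^j)$. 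Finally, $M$ is bounded with $\|M\|\le R_{\mu}$, so $|(M^j)_{n,n}|$ and $|(M_n^j)_{n,n}|$ are at most $R_{\mu}^j$.

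For part (i), the decisive observation is that $M$ is upper Hessenberg, so along any closed path of length $j$ based at an index $i$ the index decreases by at most one at each step; since the path must return to $i$ in $j$ steps, it never leaves the window $\{i-j,\dots,i+j\}$. Consequently $(M_n^j)_{ii}=(M^j)_{ii}$ for all interior indices $j<i\le n-j$, and only $O(j)$ boundary indices are affected. Writing $\tfrac1n\Tr(M_n^j)=\tfrac1n\sum_{i=1}^n(M_n^j)_{ii}$ and discarding the $O(j)$ boundary terms (each bounded by $R_{\mu}^j$, so contributing $O(R_{\mu}^j\,j/n)\to0$), we are left with $\tfrac1n\sum_{j<i\le n-j}(M^j)_{ii}$, a Ces\`{a}ro average of the sequence $(M^j)_{ii}=\int z^j|p_{i-1}|^2\,d\mu$. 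This sequence converges by hypothesis, and Ces\`{a}ro means of a convergent sequence converge to the same limit; hence $\tfrac1n\Tr(M_n^j)$ converges and $\mu$ exhibits Ces\`{a}ro weak asymptotic moments.

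For part (ii), I would first record the recurrence obtained by expanding $z\,p_{n-1}$ in the orthonormal basis and rewriting in monic form,
\[
P_n(z)=(z-h_{0,n})P_{n-1}(z)-\sum_{j=1}^{n-1}h_{j,n}\,P_{n-1-j}(z),
\]
where $h_{0,n}=(M_{\mu})_{n,n}$ and the $h_{j,n}$ are exactly as defined before the theorem. Abbreviating $r_m(z):=P_{m-1}(z)/P_m(z)$ and dividing by $P_{n-1}$ gives
\[
\frac{1}{r_n(z)}=(z-h_{0,n})-\sum_{j\ge1}h_{j,n}\prod_{i=1}^{j}r_{n-i}(z),
\]
using $\prod_{i=1}^{j}r_{n-i}=P_{n-1-j}/P_{n-1}$. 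Ratio asymptotics provides a limit $r_m\to F$ uniform on compact subsets of $\{|z|>R_{\mu}\}$, and each $r_m$ is analytic at $\infty$ with $r_m(z)=z^{-1}+O(z^{-2})$. Expanding the last identity in powers of $z^{-1}$ and matching coefficients order by order, $h_{0,n}$ is the first subleading Taylor coefficient of $r_n$ at $\infty$, and inductively $h_{j,n}$ is a fixed polynomial in the leading Taylor coefficients (at $\infty$) of $r_n,r_{n-1},\dots,r_{n-j}$. By Cauchy's estimates on a circle $|z|=\rho>R_{\mu}$, the uniform convergence $r_m\to F$ forces each such Taylor coefficient to converge, whence $h_{j,n}$ converges for every $j$ as $\nri$. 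To finish, expand $(M^j)_{n,n}$ as a finite sum over closed length-$j$ paths and substitute $(M_{\mu})_{a,b}=\tfrac{\kappa_{b-1}}{\kappa_{a-1}}g_{a,b}$, where $g_{a,b}=h_{b-a,b}$ on and above the diagonal and $g_{b+1,b}=1$ on the subdiagonal; the leading-coefficient ratios telescope to $1$ around each closed loop, so $(M^j)_{n,n}$ is a finite sum of products of shifted $h$-values, each convergent as $\nri$. Therefore $(M^j)_{n,n}$ converges and $\mu$ exhibits weak asymptotic moments.

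The step I expect to be the main obstacle is the inversion in part (ii): recovering the individual coefficients $h_{j,n}$ from the single scalar identity for $1/r_n$, and in particular justifying the passage to the limit $\nri$ inside the (a priori $n$-dependent) sum $\sum_j h_{j,n}\prod_i r_{n-i}$. I would avoid any global interchange of limit and infinite sum by working strictly order by order in $z^{-1}$ near $z=\infty$: since $\prod_{i=1}^{j}r_{n-i}=O(z^{-j})$, only the terms with $j\le k$ contribute to the coefficient of $z^{-k}$, so each $h_{j,n}$ is extracted from a finite, uniformly controlled computation.
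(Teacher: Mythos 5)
Your proof is correct, and its skeleton coincides with the paper's: weak asymptotic moments are identified with convergence of the diagonal entries $(M_{\mu}^j)_{n,n}$, Ces\`{a}ro moments with $\frac{1}{n}\Tr\bigl((\pi_n M_{\mu}\pi_n)^j\bigr)$ (via Proposition \ref{det}), part (i) becomes a Ces\`{a}ro-averaging comparison of the two, and part (ii) extracts the $h_{j,n}(\mu)$ order by order from a Laurent expansion of a ratio of consecutive monic polynomials and then writes $(M_{\mu}^j)_{n,n}$ as a universal polynomial in shifted $h$-values. Note that the paper itself does not prove this theorem --- it quotes it from \cite{RatioHessI} and proves only the relative analogue, Theorem \ref{present} --- so the substantive difference is that you supply direct arguments where the paper cites. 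In (i), the paper invokes \cite[Proposition 2.3]{WeakCD} for the estimate $\bigl|\int z^j\,d\sigma_n(\mu) - \frac{1}{n}\sum_{k=0}^{n-1}\int z^j|p_k(z;\mu)|^2\,d\mu\bigr| = O(j/n)$; your Hessenberg path-locality argument ($(M_n^j)_{ii}=(M^j)_{ii}$ for $j<i\le n-j$, with at most $2j$ boundary terms each bounded by $R_{\mu}^j$) re-proves exactly this bound and is correct. In (ii), the paper's mechanism (the proof of Theorem \ref{iffrelrat} together with Lemma \ref{hprod}) expands $P_{n-1}/P_n$ via Cramer's rule, so its Laurent coefficients are $((\pi_n M_{\mu}\pi_n)^m)_{n,n}$, and then uses Lemma \ref{hprod}(a) to peel off $h_{m-1,n}$; you instead expand the reciprocal $P_n/P_{n-1}$ via the monic recurrence, which exhibits the $h_{j,n}$ directly and makes the induction a transparent coefficient matching, and your telescoping-loop identity for $(M_{\mu}^j)_{n,n}$ is precisely a proof of Lemma \ref{hprod}(b), whose proof the paper defers to \cite{RatioHessI}. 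Two points you should state explicitly to close the argument: the upgrade from mere existence of the limit (\ref{ratiodef}) to uniform convergence on circles $|z|=\rho>R_{\mu}$ (needed for your Cauchy estimates) uses the normal-family property of $\{P_{n-1}/P_n\}_{n\in\bbN}$, i.e. \cite[Proposition 2.2]{RatioHessI}, which the paper records; and since each $r_m(z)=z^{-1}+O(z^{-2})$, any locally uniform limit $F$ has leading Laurent coefficient $1$ at $\infty$, so the inversion $1/r_n$ and the order-by-order matching never degenerate --- this is exactly where monic, rather than orthonormal, ratio asymptotics is essential.
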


The results in Section \ref{intro} allow us to prove the following relative version of that result:

\begin{theorem}\label{present}
Let $\mu$ and $\nu$ be two finite measures with compact and infinite support.  

\vspace{2mm}

(i) If $\mu$ and $\nu$ exhibit relative weak asymptotic moments, then they also exhibit relative Ces\`{a}ro weak asymptotic moments.

\vspace{2mm}

(ii) If $\mu$ and $\nu$ exhibit relative ratio asymptotics, then they also exhibit relative weak asymptotic moments.
\end{theorem}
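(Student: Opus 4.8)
The plan is to prove both parts by reformulating the integrals through the Bergman Shift matrix and then relativizing the arguments behind Theorem \ref{past}. Throughout write $\mu_{j,n}:=\int z^j|p_n(z;\mu)|^2d\mu(z)$ and record two identities: $\mu_{j,n}=(M_\mu^j)_{n,n}$, so that a weak asymptotic moment is a diagonal entry of a power of $M_\mu$; and $\int z^jd\sigma_n(\mu)(z)=\frac1n\Tr\big((M_\mu^{[n]})^j\big)$, where $M_\mu^{[n]}$ is the $n\times n$ upper-left truncation of $M_\mu$, since the zeros of $P_n$ are precisely the eigenvalues of $M_\mu^{[n]}$. Because $M_\mu$ is multiplication by $z$ on $\mcp(\mu)$ we have $\|M_\mu\|\le R_\mu$, so every entry, and every diagonal entry of a power of a compression of $M_\mu$, is bounded by $R_\mu^j$; I will use this boundedness repeatedly.

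For part (i) the first step is a standard boundary (MNT-type) estimate. Since $M_\mu$ is Hessenberg with a single nonzero subdiagonal, a closed walk of length $j$ based at index $l$ stays inside $[l-j,l+j]$, so $[(M_\mu^{[n]})^j]_{l,l}$ differs from the full diagonal entry $\mu_{j,l}$ only when $l$ lies within $j$ of the truncation edge, and then by at most a constant $C_j$ depending only on $j$ and $R_\mu$. Summing over $l$ gives $\int z^jd\sigma_n(\mu)=\frac1n\sum_l\mu_{j,l}+O(1/n)$, and the same for $\nu$. I would then write $\mu_{j,l}-\nu_{j,l}=(\mu_{j,l}-\nu_{j,l-q})+(\nu_{j,l-q}-\nu_{j,l})$: the first difference tends to $0$ by the hypothesis (\ref{jdiff}) of relative weak asymptotic moments, so its Ces\`aro average tends to $0$; the second difference telescopes under summation, leaving only $O(q)$ boundary terms, each bounded by $R_\nu^j$, hence contributing $O(1/n)$. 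This yields (\ref{jdiff2}), and no appeal to (\ref{kapo}) is required.

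For part (ii) the natural route is through Theorem \ref{iffrelrat}, which converts relative ratio asymptotics into the coefficient condition (\ref{hcond}); the subtlety is choosing the right coordinates. Let $s_{k,m}(\mu)$ be the $k$-th Laurent coefficient at $\infty$ of $P_{m-1}(z;\mu)/P_m(z;\mu)$; equivalently $s_{k,m}(\mu)=[(M_\mu^{[m]})^k]_{m,m}$, whence $|s_{k,m}(\mu)|\le R_\mu^k$ uniformly in $m$. Because these coefficients are bounded, the ratios are uniformly bounded on a circle $|z|=r>\max\{R_\mu,R_\nu\}$, so a normal-families (Vitali) argument upgrades the pointwise convergence (\ref{relratdef}) to uniform convergence there; extracting Laurent coefficients by Cauchy's formula then gives $s_{k,m}(\mu)-s_{k,m-q}(\nu)\to0$ as $\nri$, for each $k$. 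The crux is the purely algebraic identity that the full moment $\mu_{j,n}$ is a fixed, measure-independent polynomial $\Psi_j$ in the finitely many $s_{k,m}$ with $k\le j+1$ and $m$ in a bounded window about $n$. I would prove this in two stages: first, $\mu_{j,n}=(M_\mu^j)_{n,n}$ equals the $(n,n)$ entry of the $j$-th power of the monic-basis shift $\tilde H_\mu$ (whose off-subdiagonal entries are exactly the $h_{i,m}(\mu)$ appearing in $zP_{n-1}=P_n+\sum_i h_{i,n}P_{n-1-i}$), and its localized walk expansion exhibits it as a polynomial in the $h_{i,m}$ from that window; second, each $h_{i,m}$ is a polynomial in the $s_{k,m'}$ near $m$, obtained by inverting $s_{k,m}=[(M_\mu^{[m]})^k]_{m,m}$ level by level (for instance $h_{0,m}=s_{1,m}$ and $h_{1,m}=s_{2,m}-s_{1,m}^2$). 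Composing yields $\mu_{j,n}=\Psi_j(\{s_{k,m}\})$.

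Granting this identity, the conclusion is immediate: the arguments $s_{k,m}(\mu)$ all lie in a fixed bounded box on which $\Psi_j$ is uniformly continuous, and the corresponding arguments for $\mu$ at level $n$ and for $\nu$ at level $n-q$ differ by $o(1)$, so $\mu_{j,n}-\nu_{j,n-q}\to0$, which is (\ref{jdiff}). I expect the main obstacle to be exactly the coordinate choice in the crux step, namely expressing $\mu_{j,n}$ through the truncated-resolvent coefficients $s_{k,m}$ rather than through the $h_{i,m}$ directly. The point is that $|h_{i,m}(\mu)|\le R_\mu\,\kappa_{m-1-i}(\mu)/\kappa_{m-1}(\mu)$ need not be bounded when (\ref{kapo}) fails, so a naive continuity or telescoping argument in the $h$-coordinates collapses even though $\mu_{j,n}$ itself is bounded; the $s_{k,m}$, being diagonal entries of powers of a compression of a bounded operator, are automatically bounded, and it is this that lets the argument succeed without assuming (\ref{kapo}).
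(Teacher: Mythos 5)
Your argument is correct, and at its core it follows the same two-step strategy as the paper's proof. Part (i) rests on comparing $\frac1n\Tr\bigl((\pi_n M_\mu\pi_n)^j\bigr)$ with the Ces\`aro average of the diagonal moments; this is exactly the estimate the paper imports from \cite[Proposition 2.3]{WeakCD}, which you re-prove by the walk-counting argument. (Your explicit telescoping treatment of the shift $q$ is actually more careful than the paper's write-up, which absorbs the shift silently.) Part (ii) rests on writing each moment $(M_\mu^j)_{n,n}$ as a fixed polynomial in finitely many recurrence-type coefficients near position $n$ and then using convergence of differences plus boundedness; this is the paper's combination of Theorem \ref{iffrelrat} with Lemma \ref{hprod}(b). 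Your extraction of the coefficients $s_{k,m}$ via normal families and Cauchy's formula is precisely the first half of the paper's proof of Theorem \ref{iffrelrat}, your monic-basis similarity argument reproduces Lemma \ref{hprod}(b), and your level-by-level inversion ($h_{0,m}=s_{1,m}$, $h_{1,m}=s_{2,m}-s_{1,m}^2$, \ldots) is the same induction that finishes the proof of Theorem \ref{iffrelrat}.

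The one thing you should correct is the stated justification for routing part (ii) through the $s_{k,m}$ rather than the $h_{i,m}$: the claim that $|h_{i,m}(\mu)|\le R_\mu\,\kappa_{m-1-i}(\mu)/\kappa_{m-1}(\mu)$ ``need not be bounded when (\ref{kapo}) fails'' is false. By the extremal property of monic orthogonal polynomials, $\|P_m\|_{L^2(\mu)}\le\|zP_{m-1}\|_{L^2(\mu)}\le R_\mu\|P_{m-1}\|_{L^2(\mu)}$, so $\kappa_{m-1}(\mu)/\kappa_m(\mu)\le R_\mu$ for every $m$; hence $\kappa_{m-1-i}(\mu)/\kappa_{m-1}(\mu)\le R_\mu^{i}$ and $|h_{i,m}(\mu)|\le R_\mu^{i+1}$ for \emph{every} compactly supported measure, with no hypothesis at all. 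Condition (\ref{kapo}) is a \emph{lower} bound on $\kappa_n/\kappa_{n+1}$; it is needed in Theorem \ref{iffrlim} to recover the subdiagonal entries of the right limit from the ratio asymptotics, not to bound the $h$'s from above. (Alternatively, Lemma \ref{hprod}(a) and your own inversion show each $h_{j,n}$ is a polynomial in the $s_{k,m}$'s, which you already observed are bounded.) Consequently the paper's shorter route works verbatim: Theorem \ref{iffrelrat} gives (\ref{hcond}), Lemma \ref{hprod}(b) plus the automatic bound $|h_{j,n}|\le R_\mu^{j+1}$ gives (\ref{jdiff}). Your $s$-coordinate detour is valid and costs nothing in correctness, but the ``main obstacle'' you identified is not actually an obstacle.
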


To prove Theorem \ref{present}, we will require the following lemma, the proof of which is contained in the proof of \cite[Theorem 1.2]{RatioHessI}:

\begin{lemma}\label{hprod}
Let $\pi_n$ be the projection onto the span of $\{1,\ldots,z^{n-1}\}$ inside $\mcp(\mu)$.
\begin{enumerate}[before=\itshape,font=\normalfont]
\item For any natural number $m\geq1$, one can write
\[
\left((\pi_{n}M_{\mu}\pi_{n})^m\right)_{n,n}=h_{m-1,n}(\mu)+\beta_{n,m},
\]
where $\beta_{n,m}$ can be expressed as a sum of products of factors of the form $h_{j,n-k}(\mu)$ for appropriate values of $j,k\in\{0,1,\ldots,m-2\}$.

\item  For any natural number $m\geq1$,  $(M_{\mu}^m)_{n,n}$ can be expressed as a sum of products of factors of the form $h_{j,n-k}(\mu)$ where $j\in\{0,1,\ldots,m-1\}$ and $k\in\{-m+1,-m+2,\ldots,m-2,m-1\}$
\end{enumerate}
\end{lemma}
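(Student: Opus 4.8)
The plan is to expand each diagonal entry as a sum over closed paths and to exploit a telescoping of the leading-coefficient ratios. Write $A=\pi_{n}M_{\mu}\pi_{n}$ (for the first assertion) or $A=M_{\mu}$ (for the second), so that
\[
(A^{m})_{n,n}=\sum (A)_{n,j_{1}}(A)_{j_{1},j_{2}}\cdots (A)_{j_{m-1},n},
\]
the sum running over all paths $n=j_{0}\to j_{1}\to\cdots\to j_{m}=n$ with each $j_{t}$ in the allowed index set ($\{1,\ldots,n\}$ in the projected case, all of $\bbN$ otherwise). The first identity I would record is $(M_{\mu})_{a,b}=\bigl(\kappa_{b-1}(\mu)/\kappa_{a-1}(\mu)\bigr)\,h_{b-a,b}(\mu)$, obtained by unwinding the definition of $h_{j,n}(\mu)$; in particular $h_{-1,b}(\mu)=1$ for every $b$, since the subdiagonal entry $(M_{\mu})_{b+1,b}$ equals $\kappa_{b-1}(\mu)/\kappa_{b}(\mu)$. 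Substituting this into the path product and using $j_{0}=j_{m}=n$, the ratios $\kappa_{j_{t}-1}(\mu)/\kappa_{j_{t-1}-1}(\mu)$ telescope to $1$, so each path contributes exactly $\prod_{t=1}^{m}h_{\,j_{t}-j_{t-1},\,j_{t}}(\mu)$. This is the crux: every closed-loop product is a pure product of $h$'s with no surviving leading-coefficient factors.

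For the first assertion, the Hessenberg structure forces $j_{t}\geq j_{t-1}-1$, so the increments $s_{t}:=j_{t}-j_{t-1}$ satisfy $s_{t}\geq -1$ and $\sum_{t}s_{t}=0$. A down-step ($s_{t}=-1$) gives the trivial factor $h_{-1,\cdot}(\mu)=1$, so only flat and up-steps ($s_{t}\geq 0$) produce nontrivial factors $h_{s_{t},j_{t}}(\mu)$. Since the remaining $m-1$ increments are each $\geq -1$ and sum to $-s_{t}$, a single increment can equal $m-1$ only if all others equal $-1$; within the projection ($j_{t}\leq n$) a short counting argument shows this forces the path $n\to n-1\to\cdots\to n-m+1\to n$, whose product is exactly $h_{m-1,n}(\mu)$. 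Every other path thus has all increments $\leq m-2$, so its nontrivial factors carry $j\leq m-2$; and a nontrivial factor landing at some $j_{t}<n$ requires at least $n-j_{t}$ of the earlier steps to be down-steps while at least one later step is needed to return to $n$, forcing $n-j_{t}\leq m-2$. Collecting the dominant path separately then yields $(A^{m})_{n,n}=h_{m-1,n}(\mu)+\beta_{n,m}$ with $\beta_{n,m}$ of the stated form.

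For the second assertion the projection is absent, so indices may exceed $n$, but the same telescoping applies verbatim and every path again contributes $\prod_{t}h_{s_{t},j_{t}}(\mu)$. Now the constraints $s_{t}\geq -1$ and $\sum_{t}s_{t}=0$ only bound a single increment by $m-1$, giving $j\in\{0,\ldots,m-1\}$, while the landing index can dip to $n-(m-1)$ (at most $m-1$ consecutive down-steps before returning) or rise to $n+(m-1)$ (a large up-step must be undone by at most $m-1$ unit down-steps), giving $k\in\{-m+1,\ldots,m-1\}$. The main obstacle I anticipate is essentially bookkeeping: verifying these index ranges, and in particular confirming that in the projected case the increment of size $m-1$ and the deepest landing indices occur only along the single dominant path. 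This is exactly where the bound $j_{t}\leq n$ is used essentially, and it is what cleanly isolates the leading term $h_{m-1,n}(\mu)$ from the remainder $\beta_{n,m}$.
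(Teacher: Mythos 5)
Your proof is correct and is essentially the argument the paper relies on: the paper does not prove Lemma \ref{hprod} in-line but defers to the proof of Theorem 1.2 in \cite{RatioHessI} (noting part (a) appears in \cite{Beck}), and that argument rests on exactly your expansion of diagonal entries of matrix powers as sums over closed Hessenberg paths, with the ratios $\kappa_{j_t-1}(\mu)/\kappa_{j_{t-1}-1}(\mu)$ telescoping to $1$ around each closed loop so that every path contributes a pure product of $h$'s. Your isolation of the unique dominant path $n\to n-1\to\cdots\to n-m+1\to n$ (forced by the projection bound $j_t\leq n$) and the counting bounds on increments and landing indices are accurate; the only implicit assumption, which the lemma itself shares, is that $n\geq m$ so that $h_{m-1,n}(\mu)$ is defined.
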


\noindent\textit{Remark.}  Lemma \ref{hprod}(a) is stated in the proof of \cite[Theorem 3.7]{Beck}.  A result similar to Lemma \ref{hprod}(b) for tri-diagonal matrices is used in the proof of \cite[Lemma 3]{ND79} (see also \cite[Equation 4.3]{GHVA} for a related result for banded matrices).

\vspace{2mm}

\begin{proof}[Proof of Theorem \ref{present}]
(i)  We proceed exactly as in the proof of Theorem \ref{past}(i) in \cite{RatioHessI}.  Indeed, suppose $\mu$ and $\nu$ exhibit relative weak asymptotic moments.  We know from \cite[Proposition 2.3]{WeakCD} that
\begin{align*}
&\left|\int z^jd\sigma_n(\mu)(z)-\int z^jd\sigma_n(\mu)(z)-\frac{1}{n}\sum_{k=0}^{n-1}\left(\int z^j|p_n(z;\mu)|^2d\mu(z)-\int z^j|p_n(z;\nu)|^2d\nu(z)\right)\right|\\
&\qquad\qquad\qquad\qquad\leq\frac{2j(R_{\mu}+R_{\nu})}{n},\qquad\qquad\qquad j\in\bbN.
\end{align*}
As $\nri$, the sum over $k$ tends to zero by hypothesis and the right hand side tends to zero, so the difference between the first two integrals must also tend to zero.

\vspace{2mm}

(ii):  Suppose that $\mu$ and $\nu$ exhibit relative ratio asymptotics.  Theorem \ref{iffrelrat} implies that for some $q\in\bbN_0$
\[
\lim_{\nri}\left(h_{j,n}(\mu)-h_{j,n-q}(\nu)\right)=0,\qquad j\in\bbN_0.
\]
Combining this with the second part of Lemma \ref{hprod} and noting that
\[
\int z^m|p_n(z;\mu)|^2d\mu(z)=\left(M_{\mu}^m\right)_{n+1,n+1}
\]
proves the desired result. 
\end{proof}

\vspace{4mm}

In the next section, we will prove the results stated in Section \ref{intro}.

\section{Proof of the Main Theorems}\label{prf}

Our main objective in this section is to prove the results stated in Section \ref{intro}.  Our methods will be similar to those used in \cite{RatioHessI}; indeed we will rely on results from \cite{RatioHessI} in some of our proofs.  We begin by recalling the following fact, which is contained in Proposition 2.2 in \cite{WeakCD}.

\begin{prop}\label{det}
Let $\pi_n$ be the projection onto the $n$-dimensional subspace given by the span of $\{1,z,\ldots,z^{n-1}\}$ inside $\mcp(\mu)$.  The polynomial $P_n(z;\mu)$ and the matrix $M_{\mu}$ are related by
\begin{align}\label{keydet}
P_n(z;\mu)=\det\left(z-\pi_n M_{\mu}\pi_n\right),
\end{align}
where the determinant is the usual definition of the determinant of a finite matrix.
\end{prop}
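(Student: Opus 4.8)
The plan is to recognize $A:=\pi_n M_\mu\pi_n$ as the compression of multiplication by $z$ to the $n$-dimensional space $V_n:=\mathrm{span}\{1,z,\dots,z^{n-1}\}\subseteq\mcp(\mu)$, and then to extract $P_n(z;\mu)$ from the action of $A$ on the cyclic vector $1$ via the Cayley--Hamilton theorem. Both $P_n(z;\mu)$ and $\det(z-A)$ are manifestly monic of degree $n$, so one only needs to match their lower-order coefficients; the mechanism for doing so will be that the characteristic polynomial of $A$, evaluated at the operator $A$ and applied to the constant function $1$, reproduces $P_n(z;\mu)$ on the nose.

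First I would record that, for $v\in V_n$, one has $Av=\pi_n(zv)$, since $\pi_n v=v$ and $M_\mu v=zv$. Next I would show by induction that $A^k\cdot 1=z^k$ for all $0\le k\le n-1$: the base case is immediate, and if $A^k\cdot 1=z^k$ with $k+1\le n-1$, then $A^{k+1}\cdot 1=\pi_n(z^{k+1})=z^{k+1}$ because $z^{k+1}\in V_n$ so the projection acts as the identity. The crucial step is the first power that leaves $V_n$, namely $A^n\cdot 1=\pi_n(z^n)$. Here I would use that $P_n(z;\mu)$ is monic of degree $n$ and orthogonal to $V_n$, so that $z^n=(z^n-P_n(z;\mu))+P_n(z;\mu)$ is precisely the orthogonal splitting of $z^n$ into its $V_n$ component and its $V_n^{\perp}$ component; hence $A^n\cdot 1=\pi_n(z^n)=z^n-P_n(z;\mu)$.

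Finally, writing $\chi(z)=\det(z-A)=z^n+\sum_{k=0}^{n-1}a_k z^k$, the Cayley--Hamilton theorem gives $\chi(A)=0$, and applying this operator identity to the vector $1$ yields $0=\chi(A)\cdot 1=\sum_{k=0}^{n-1}a_k z^k+(z^n-P_n(z;\mu))=\chi(z)-P_n(z;\mu)$, whence $\chi(z)=P_n(z;\mu)$, as claimed. I expect the main obstacle to be the justification of $A^n\cdot 1=z^n-P_n(z;\mu)$: it relies on reading $\pi_n$ as the \emph{orthogonal} projection inside $\mcp(\mu)$ and on invoking the defining orthogonality of the monic orthogonal polynomial, and one should check the degenerate small-$n$ cases directly. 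As a backup route, one could instead expand $\det(z-A)$ along its last column using the upper Hessenberg structure of $M_\mu$; the minor obtained by deleting row $i$ and the last column telescopes into a product of subdiagonal entries times a smaller characteristic determinant, which shows that $\det(z-A)$ obeys the same recursion in $n$ as $P_n(z;\mu)$ and lets one conclude by induction.
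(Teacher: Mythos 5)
Your proof is correct, but there is nothing in the paper to compare it against line by line: the paper states Proposition \ref{det} without proof, quoting it from \cite[Proposition 2.2]{WeakCD}. Your argument is a legitimate self-contained substitute, and it runs on the same engine as the standard proof of this identity: the relations $\pi_n z^k = z^k$ for $k\leq n-1$ and $\pi_n z^n = z^n - P_n(z;\mu)$ (the latter being exactly the characterization of the monic orthogonal polynomial as $z^n$ minus its orthogonal projection onto the span of lower powers) say precisely that, in the monomial basis of $V_n=\mathrm{span}\{1,z,\dots,z^{n-1}\}$, the compression $\pi_n M_{\mu}\pi_n$ acts as the companion matrix of $P_n(\cdot\,;\mu)$; basis invariance of the characteristic polynomial then yields \eqref{keydet} immediately, and your Cayley--Hamilton step applied to the cyclic vector $1$ is this observation in only slightly different clothing. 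Two points you leave implicit deserve a sentence each, though neither is a genuine gap. First, $\det\left(z-\pi_n M_{\mu}\pi_n\right)$ in \eqref{keydet} means the determinant of the upper-left $n\times n$ block of the matrix $M_{\mu}$, i.e.\ of the compression written in the orthonormal basis $\{p_0,\dots,p_{n-1}\}$, whereas you apply Cayley--Hamilton to the operator restricted to $V_n$; these agree because the characteristic polynomial does not depend on the choice of basis. Second, the identity $0=\chi(A)\cdot 1=\chi(z)-P_n(z;\mu)$, with $\chi(z)=\det(z-A)$, holds a priori only as elements of $L^2(\mu)$, i.e.\ $\mu$-almost everywhere; since $\supp(\mu)$ is infinite (a standing assumption of the paper), a polynomial vanishing $\mu$-a.e.\ vanishes identically, which is what upgrades the $L^2$ statement to the polynomial identity \eqref{keydet}. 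Your fallback route (Laplace expansion of the Hessenberg determinant along the last column plus induction on the resulting recurrence) is also valid and is the other common proof in the literature.
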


We will also make repeated use of the fact that $\{P_{n-1}(z;\mu)/P_n(z;\mu)\}_{n\in\bbN}$ (and hence also $\{p_{n-1}(z;\mu)/p_n(z;\mu)\}_{n\in\bbN}$) is a normal family (see \cite[Proposition 2.2]{RatioHessI}).

Our first task is to prove Theorem \ref{iffrelrat}, which we will do by appealing to Lemma \ref{hprod}.

\begin{proof}[Proof of Theorem \ref{iffrelrat}:]
Suppose (\ref{hcond}) holds for all $j\in\bbN_0$.  Let $\rho_{M_{\mu},n}(z)=(z-\pi_nM_{\mu}\pi_n)^{-1}$ be the resolvent of the truncated $M_{\mu}$ matrix.  By (\ref{keydet}) and Cramer's rule, we can write (when $|z|$ is sufficiently large)
\begin{align}\label{cram1}
\frac{P_{n-1}(z;\mu)}{P_{n}(z;\mu)}&=\rho_{M_{\mu},n}(z)_{n,n}=\sum_{m=0}^{\infty}\frac{\left((\pi_{n}M_{\mu}\pi_{n})^m\right)_{n,n}}{z^{m+1}}
\end{align}
(see also \cite[equation (2.21)]{SimonWeak}).  Therefore, when $|z|$ is sufficiently large, we have
\begin{align}\label{cram11}
\frac{P_{n-1}(z;\mu)}{P_{n}(z;\mu)}-\frac{P_{n-q-1}(z;\nu)}{P_{n-q}(z;\nu)}&=\sum_{m=0}^{\infty}\frac{\left((\pi_{n}M_{\mu}\pi_{n})^m\right)_{n,n}-\left((\pi_{n-q}M_{\nu}\pi_{n-q})^m\right)_{n-q,n-q}}{z^{m+1}}.
\end{align}
We will show that all of the Laurent coefficients of this expansion converge to $0$ as $\nri$.  This and \cite[Proposition 2.2]{RatioHessI} will prove the desired result.

To this end, recall that we are assuming that $h_{j,n}(\mu)=h_{j,n-q}(\nu)+o(1)$ as $\nri$.  Therefore, Lemma \ref{hprod} implies that for each $m\in\bbN_0$, $\left((\pi_{n}M_{\mu}\pi_{n})^m\right)_{n,n}=\left((\pi_{n-q}M_{\nu}\pi_{n-q})^m\right)_{n-q,n-q}+o(1)$ as $\kri$, which is our desired conclusion.

For the converse, suppose the monic orthogonal polynomials for $\mu$ and $\nu$ satisfy the relation (\ref{shiftseq}).  In this case, we know that each Laurent coefficient in (\ref{cram11}) converges to $0$ as $\nri$.  We will proceed by induction to show that (\ref{hcond}) holds.

For the base case of the induction, we consider the $m=1$ term in (\ref{cram11}).  The numerator is $h_{0,n}(\mu)-h_{0,n-q}(\nu)$, so we have proven (\ref{hcond}) for $j=0$.  This will serve as the base case of our induction.  As our induction hypothesis, assume that (\ref{hcond}) holds for all $j<k$ for some fixed $k\in\bbN$.  If we look at the $m=k+1$ term in (\ref{cram11}), then we see that the coefficient is $\left((\pi_{n}M_{\mu}\pi_{n})^{k+1}\right)_{n,n}-\left((\pi_{n-q}M_{\nu}\pi_{n-q})^{k+1}\right)_{n-q,n-q}$, which therefore must tend to zero as $\nri$.  Lemma \ref{hprod} and the induction hypothesis imply that
\[
\lim_{\nri}\left(\left((\pi_{n}M_{\mu}\pi_{n})^{k+1}\right)_{n,n}-h_{k,n}(\mu)-\left(\left((\pi_{n-q}M_{\nu}\pi_{n-q})^{k+1}\right)_{n-q,n-q}-h_{k,n-q}(\nu)\right)\right)=0.
\]
This implies (\ref{hcond}) holds for $j=k$ and completes the induction.
\end{proof}

Now we turn our attention to right limits and normalized relative ratio asymptotics.  Our first task is to prove Theorem \ref{iffrlim}, which concerns a necessary and sufficient condition for the matrix $M_{\mu}$ to converge to a right limit through a subsequence $\mcn$.

\begin{proof}[Proof of Theorem \ref{iffrlim}:]
We use (\ref{cram1}) again to write
\begin{align}\label{cram2}
\frac{P_{n+j-1}(z;\mu)}{P_{n+j}(z;\mu)}&=\rho_{M_{\mu},n+j}(z)_{n+j,n+j}=\sum_{m=0}^{\infty}\frac{\left((\pi_{n+j}M_{\mu}\pi_{n+j})^m\right)_{n+j,n+j}}{z^{m+1}}.
\end{align}
Therefore we have convergence uniformly on $\{z:|z|\geq r\}$ for every $r>R_{\mu}$ if and only if each of the Laurent coefficients in (\ref{cram2}) converges.

Let us suppose that $\mcn\subseteq\bbN$ is a subsequence through which $M_{\mu}$ converges to a right limit.  Then $\mcn+j:=\{n+j:n\in\mcn,\,n+j>0\}$ is also a subsequence through which $M_{\mu}$ converges to a right limit.  Therefore, for every $j,k\in\bbZ$, the following limit exists:
\[
\lim_{{\nri}\atop{n\in\mcn}}(M_{\mu})_{n-j,n-j-k}.
\]
Lemma \ref{hprod} implies that the coefficient $\left((\pi_{n+j}M_{\mu}\pi_{n+j})^m\right)_{n+j,n+j}$ appearing in (\ref{cram2}) can be written as a finite sum of products of elements of this form.  Therefore, the Laurent coefficients in (\ref{cram2}) converge as $\nri$ through $\mcn$, and hence we have the desired ratio asymptotics for the monic orthogonal polynomials.  However, convergence to a right limit also implies convergence of the ratio $\kappa_{n+j-1}\kappa_{n+j}^{-1}$ to a ($j$-dependent) limit as $\nri$ through $\mcn$.  Therefore, we have shown that condition (I) of the theorem holds.  Notice that this half of the proof did not make use of the condition (\ref{kapo}).

Now let us suppose that there are functions $\{f_j\}_{j\in\bbZ}$ as in the statement of condition (I) of the theorem.  In this case, we know that $\kappa_{n+j-1}\kappa_{n+j}^{-1}$ converges to a ($j$-dependent) limit as $\nri$ through $\mcn$.  In other words, we know that the following limits exist:
\[
\lim_{{\nri}\atop{n\in\mcn}}M_{n+j+1,n+j},\qquad j\in\bbZ.
\]
Since we are assuming that each of these limits is non-zero, we know that each of the Laurent coefficients appearing in (\ref{cram2}) converges as $\nri$ through $\mcn$.  
If we examine the $m=1$ term in (\ref{cram2}), we also conclude that the following limits exist:
\[
\lim_{{\nri}\atop{n\in\mcn}}M_{n+j,n+j},\qquad j\in\bbZ.
\]
This will serve as the base case of our induction.  Suppose as our induction hypothesis that the following limits exist for all integers $k<q$:
\[
\lim_{{\nri}\atop{n\in\mcn}}M_{n+j-k,n+j},\qquad j\in\bbZ.
\]
We will show that the same limits exist for $k=q$, which will prove that $M_{\mu}$ converges to a right limit as $\nri$ through $\mcn$.

Consider then the coefficient of $z^{-q-2}$ in (\ref{cram2}).  Lemma \ref{hprod} implies that we can write
\begin{align}\label{longsum1}
\left((\pi_{n+j}M_{\mu}\pi_{n+j})^{q+1}\right)_{n+j,n+j}=h_{q,n+j}(\mu)+\beta_{n+j,q+1},
\end{align}
where the induction hypothesis implies that $\beta_{n+j,q+1}$ converges as $\nri$ through $\mcn$.  Since the left hand side of (\ref{longsum1}) also converges as $\nri$ through $\mcn$, we conclude that $h_{q,n+j}(\mu)$ converges as $\nri$ through $\mcn$.  To complete the proof of this half of the theorem, we need only recall that
\begin{align}\label{ratm}
\frac{\kappa_{n-2}(\mu)}{\kappa_{n-1}(\mu)}=\mmu_{n,n-1}.
\end{align}
Our assumptions tell us that each ratio $\{\kappa_{n+\ell-1}\kappa_{n+\ell}^{-1}\}_{\ell\in\bbZ}$ converges as $\nri$ through $\mcn$ and (\ref{kapo}) implies that the limit is not zero.  Therefore, it must be the case that $\lim_{\nri}\mmu_{n+j-q,n+j}$ exists for all $j\in\bbZ$ as $\nri$ through $\mcn$.  This completes the induction.
\end{proof}

As we remarked earlier, the proof of Theorem \ref{iffrlim} shows that the right limit of $M_{\mu}$ determines the functions $\{f_j\}_{j\in\bbZ}$ appearing in (\ref{fjdef}) and also shows that if (\ref{kapo}) is satisfied, then the right limit of $M_{\mu}$ is determined by the functions $\{f_j\}_{j\in\bbZ}$ appearing in (\ref{fjdef}) .  This observation will enable us to prove Corollary \ref{genrelrat}, which concerns a necessary and sufficient condition for normalized relative ratio asymptotics through subsequences.

\begin{proof}[Proof of Corollary \ref{genrelrat}:]
Suppose $\mu$ and $\nu$ exhibit normalized relative ratio asymptotics through $\mcn(\mu)$ and $\mcn(\nu)$.  Let $\mcn^*$ be a subsequence so that $M_{\mu}$ converges to a right limit $X$ as $\nri$ through $\{n_k\}_{k\in\mcn^*}$.  By Theorem \ref{iffrlim}, we know that for each $j\in\bbZ$, there is a function $f_j$ so that
\[
\lim_{{\kri}\atop{k\in\mcn^*}}\frac{p_{n_k+j-1}(z;\mu)}{p_{n_k+j}(z;\mu)}=f_j(z),\qquad |z|>R_{\mu}.
\]
From the definition of normalized relative ratio asymptotics, we see that these same equalities hold and with the same functions $\{f_j\}_{j\in\bbZ}$ if we replace $n_k$ by $m_k$ and $p_{n}(z;\mu)$ by $p_n(z;\nu)$.  Invoking Theorem \ref{iffrlim} again shows that $M_{\nu}$ converges to $X$ as $\nri$ through $\{m_k\}_{k\in\mcn^*}$.

For the converse, suppose condition (ii) in the statement of the corollary is true for the sequences $\mcn(\mu)$ and $\mcn(\nu)$.  Suppose for contradiction that there is a $z_0\in\{z:|z|>\max\{R_{\mu},R_{\nu}\}\}$, a $j\in\bbZ$, and a subsequence $\mcn^*$ so that
\[
\lim_{{\kri}\atop{k\in\mcn^*}}\left(\frac{p_{n_k+j-1}(z_0;\mu)}{p_{n_k+j}(z_0;\mu)}-\frac{p_{m_k+j-1}(z_0;\nu)}{p_{m_k+j}(z_0;\nu)}\right)=\mcz\neq0.
\]
By taking a subsequence of $\mcn^*$ if necessary, we may assume that both $M_{\mu}$ and $M_{\nu}$ converge to a right limit as $\nri$ through $\{n_k\}_{k\in\mcn^*}$ and $\{m_k\}_{k\in\mcn^*}$ respectively.  By assumption, these must be the same right limit, and hence the sequences $\{f_j\}_{j\in\bbZ}$ appearing in (\ref{fjdef}) must be the same.  However, this would imply that $\mcz=0$, which is a contradiction.

Notice that the proof of the implication (ii) implies (i) only relies upon the fact that condition (II) of Theorem \ref{iffrlim} implies condition (I) of Theorem \ref{iffrlim}.  Since that implication does not require the assumption (\ref{kapo}), this proves the last statement of the corollary.
\end{proof}

In the next section we will consider applications of the above results to random orthogonal polynomials on the unit circle and the real line.

\section{Random Orthogonal Polynomials}\label{random}

The results in Section \ref{intro} are most useful in settings where we have greater information about the structure of the Bergman Shift matrix.  In particular, if the measure $\mu$ is supported on the unit circle, then the Bergman Shift matrix is unitary and has entries that can be written explicitly in terms of the coefficients in the recursion relation satisfied by the monic orthogonal polynomials.  If $\mu$ is supported on a compact subset of the real line, then the Bergman Shift matrix is self-adjoint and banded and the entries are also expressed in terms of recursion coefficients.  In this section, we will explore some consequences of our results in cases when the measure is chosen randomly by choosing the recursion coefficients randomly.

\subsection{The Unit Circle Case}\label{randomopuc}

Suppose $\mu$ satisfies $\supp(\mu)\subseteq\partial\bbD$.  In this case, it is well-known that the entries of the Bergman shift matrix can be expressed in terms of the Verblunsky coefficients $\{\alpha_n\}_{n\geq0}$, where $P_n(0;\mu)=-\bar{\alpha}_{n-1}$ (see \cite[Section 4.1]{OPUC1}).  Verblunsky's Theorem (see \cite[Chapter 1]{OPUC1}) tells us that there is a one-to-one correspondence between infinite sequences in $\bbD$ and non-trivial probability measures on the unit circle.  Therefore, one can consider random measures on the unit circle by considering random sequences of Verblunsky coefficients (as in \cite{DS,KS,Stoiciu,Tep1}).  Our goal is to show that the orthonormal polynomials for a random measure almost surely exhibit a universal ratio asymptotic behavior.  The precise meaning of this statement depends on the random distribution from which we select our Verblunsky coefficients.

\vspace{2mm}

\noindent\textbf{Definition.}  If $\tau$ is a probability measure satisfying $\tau(\bbD)=1$, then the class $K(\tau)$ is the set of all probability measures on the unit circle whose Verblunsky coefficients are all contained in $\supp(\tau)\cap\bbD$.

\vspace{2mm}

Now we are ready to state our main result of this section.

\begin{theorem}\label{trandom}
Suppose $\tau$ is a probability measure satisfying $\tau(\bbD)=1$ and let $\mu$ be a probability measure on the unit circle chosen randomly by selecting the sequence of Verblunsky coefficients $\{\alpha_n\}_{n\geq0}$ as i.i.d. random variables with distribution $\tau$.  Then almost surely it is true that for every $\nu$ in the class $K(\tau)$, there exists a subsequence $\{m_n\}_{n=1}^{\infty}\subseteq\bbN$ such that
\[
\lim_{\nri}\left(\frac{p_{m_n-1}(z;\mu)}{p_{m_n}(z;\mu)}-\frac{p_{n-1}(z;\nu)}{p_{n}(z;\nu)}\right)=0,\qquad|z|>1.
\]
\end{theorem}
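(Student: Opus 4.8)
The plan is to reduce Theorem \ref{trandom} to Corollary \ref{samerat}, so that the problem becomes one about right limits of the GGT matrices rather than about the polynomials directly. First I would observe that all the measures involved are supported on $\partial\bbD$, so their Verblunsky coefficients lie in $\bbD$, and the GGT matrix $M_{\mu}$ depends continuously (entrywise) on finitely many Verblunsky coefficients through the explicit formulas in \cite[Section 4.1]{OPUC1}; moreover the condition (\ref{kapo}) is automatic on the unit circle since $\kappa_n/\kappa_{n+1}=\prod(1-|\alpha_j|^2)^{1/2}$-type ratios are bounded below on any class $K(\tau)$ whose Verblunsky coefficients stay in $\supp(\tau)\cap\bbD$ (one must be slightly careful here, but the relevant ratio for consecutive indices is $(1-|\alpha_n|^2)^{1/2}$, and for a fixed $\nu\in K(\tau)$ this is bounded below). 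Thus Corollary \ref{samerat} applies, and it suffices to prove that almost surely $\mcb_{\nu}\subseteq\mcb_{\mu}$ for every $\nu\in K(\tau)$: every right limit of $M_{\nu}$ is also a right limit of $M_{\mu}$.

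Next I would translate the desired conclusion into a statement purely about the Verblunsky coefficients. A right limit $X$ of $M_{\nu}$ is determined (via the GGT formulas) by a bi-infinite sequence $\{\beta_k\}_{k\in\bbZ}\subseteq\overline{\supp(\tau)\cap\bbD}$ arising as a limit of blocks of the Verblunsky coefficients of $\nu$. To realize $X$ as a right limit of $M_{\mu}$, it is enough to find a subsequence $\{m_n\}$ along which arbitrarily long finite blocks of $\mu$'s (random) Verblunsky coefficients approximate the corresponding finite blocks $(\beta_{-m},\ldots,\beta_m)$ of the limiting sequence to within any prescribed tolerance. Concretely, for each $m$ and each $\ve>0$ I want infinitely many indices $N$ such that $|\alpha_{N+i}-\beta_i|<\ve$ for all $|i|\le m$, where $\{\alpha_j\}$ are the i.i.d.\ samples of $\mu$. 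This is exactly a recurrence statement for i.i.d.\ sequences.

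The key probabilistic step, and the one I expect to be the main obstacle, is establishing this recurrence almost surely and \emph{uniformly} over the uncountable family $K(\tau)$. For a single target block $(\beta_{-m},\ldots,\beta_m)$ consisting of points in $\supp(\tau)$, independence gives that the events $\{|\alpha_{N+i}-\beta_i|<\ve,\ |i|\le m\}$ over disjoint windows $N$ are independent with a common positive probability (positive precisely because each $\beta_i\in\supp(\tau)$, so the ball of radius $\ve$ about $\beta_i$ has positive $\tau$-measure); hence by the second Borel--Cantelli lemma each such block occurs infinitely often, almost surely. The difficulty is the quantifier ``for every $\nu\in K(\tau)$'' together with ``for every $\ve$'': one cannot directly intersect uncountably many almost-sure events. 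I would resolve this by a separability/compactness reduction: since $\supp(\tau)\cap\bbD$ is separable, one selects a countable dense set of candidate target blocks with entries drawn from a countable dense subset $D\subseteq\supp(\tau)$ and with $\ve$ ranging over $\{1/k\}$, and proves the recurrence almost surely on this countable collection simultaneously (a countable intersection of full-measure events). Then for an arbitrary $\nu\in K(\tau)$ and arbitrary finite block of its Verblunsky coefficients, one approximates that block by a nearby element of the countable dense collection, so that a window of $\mu$'s coefficients matching the dense target to tolerance $1/k$ also matches $\nu$'s block to tolerance $2/k$.

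Finally I would assemble a diagonal subsequence $\{m_n\}$: enumerate the approximation requirements for $\nu$ by increasing block-length $m$ and decreasing tolerance, use the recurrence to pick successive indices $m_n$ satisfying the $n$-th requirement, and conclude that along $\{m_n\}$ the matrix $M_{\mu}$ converges to the given right limit of $M_{\nu}$. By the remark following Theorem \ref{iffrlim}, this right limit determines the ratio-asymptotic functions $\{f_j\}$, so along $\{m_n\}$ the ratios $p_{m_n-1}/p_{m_n}$ converge to the same $f_0$ as the ratios $p_{n-1}/p_n$ for $\nu$, which is exactly the asserted limit. The only genuinely substantive point to nail down is the uniform recurrence over $K(\tau)$ just described; the rest is the continuity of the GGT entries in the Verblunsky coefficients plus the machinery already proved in Theorem \ref{iffrlim} and Corollary \ref{samerat}.
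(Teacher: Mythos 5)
Your proposal is essentially the paper's own argument: an almost-sure recurrence for the i.i.d.\ Verblunsky coefficients proved with the second Borel--Cantelli lemma, a reduction of the uncountable quantifier over $K(\tau)$ to a countable family of target blocks, and a transfer back to ratio asymptotics through right limits of the GGT matrices. The paper organizes the countable reduction by first building a single deterministic sequence $S^*$ whose right limits are all of $\supp(\tau)^{\bbZ}$ (Lemma \ref{dense}) and then showing that the random sequence almost surely contains arbitrarily long, arbitrarily accurate copies of its initial segments (Lemma \ref{weakall}); your countable dense family of blocks with tolerances $1/k$ is the same idea in different packaging. Likewise, where the paper pairs the indices $n$ and $m_n$ by a normal-families contradiction argument (the quantity $\mcf(n,k,r)$ together with Corollary \ref{genrelrat}), you use a direct diagonal construction; both work.

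One claim in your write-up is false, although fortunately it is not load-bearing: condition (\ref{kapo}) is \emph{not} automatic for measures in $K(\tau)$. The hypothesis $\tau(\bbD)=1$ does not prevent $\supp(\tau)$ from accumulating at $\partial\bbD$, so a fixed $\nu\in K(\tau)$ may have $|\alpha_n(\nu)|\to 1$ and hence $\kappa_n(\nu)/\kappa_{n+1}(\nu)=(1-|\alpha_n(\nu)|^2)^{1/2}\to 0$; the degenerate example of Section \ref{examp} (with $\alpha_n=1-\frac{1}{n+2}$) lives in exactly such a class, and in that situation the random $\mu$ almost surely violates (\ref{kapo}) as well. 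Your argument survives because nothing you use actually requires (\ref{kapo}): Corollary \ref{samerat} carries no such hypothesis, and the only implications you need from Theorem \ref{iffrlim} and Corollary \ref{genrelrat} --- namely that a right limit of the matrix forces the corresponding ratio asymptotics, i.e.\ (II)$\Rightarrow$(I) and (ii)$\Rightarrow$(i) --- are explicitly stated to hold without (\ref{kapo}). This is precisely why the paper's proof invokes only those directions. A smaller point of precision: your phrase ``along $\{m_n\}$ the matrix $M_{\mu}$ converges to the given right limit of $M_{\nu}$'' is loose, since $M_{\nu}$ need not have a single right limit along the full sequence $n$; what your block matching actually yields is that the window of $M_{\mu}$ centered at $m_n$ and the window of $M_{\nu}$ centered at $n$ approach one another, from which the stated limit follows by the Laurent-coefficient and normal-family argument (or, as in the paper, by a contradiction argument along subsequences where $M_{\nu}$ does converge to a right limit).
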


The proof of this result will be relatively straightforward given what we already know.  Indeed, since the set $\{p_{n-1}(z;\nu)/p_{n}(z;\nu)\}_{n\in\bbN}$ is a normal family, it suffices to show that whenever we have a subsequence through which the orthonormal polynomials for $\nu$ exhibit ratio asymptotics, there is a subsequence through which the orthonormal polynomials for $\mu$ exhibit the same ratio asymptotics.  We have seen that ratio asymptotic behavior is controlled by the right limit behavior of the Bergman Shift matrix, so it will suffice to show that the Bergman Shift matrix for the measure $\mu$ almost surely exhibits every possible kind of right limit behavior within the class $K(\tau)$.  This will follow from basic probability theory.

We begin with some useful lemmas.  We have already mentioned our definition of the right limit of a matrix, but for the right limit of a sequence, we use the same definition as in \cite{Rice}.  The form of the Bergman Shift matrix when $\supp(\mu)\subseteq\partial\bbD$ makes it clear that if the sequence of Verblunsky coefficients converges to a right limit through a subsequence $\mcn\subseteq\bbN$, then so does the corresponding Bergman Shift matrix and through the same subsequence $\mcn$.

\begin{lemma}\label{dense}
Let $\tau$ be a probability measure satisfying $\tau(\bbD)=1$.  There exists a sequence $\{z_n\}_{n\in\bbN}$ in $(\bbD\cap\supp(\tau))^{\bbN}$ that has all of $\supp(\tau)^{\bbZ}$ as right limits.
\end{lemma}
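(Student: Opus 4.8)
The plan is to build the sequence by concatenating a suitable list of finite ``words'' drawn from a countable dense subset of $\supp(\tau)$, and then to recover each prescribed bi-infinite target as a right limit by a density-plus-diagonalization argument.

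First I would record the topological input that makes the construction possible: the set $\bbD\cap\supp(\tau)$ is dense in $\supp(\tau)$. Indeed, if $w\in\supp(\tau)$ and $U$ is any neighborhood of $w$, then $\tau(U)>0$ by definition of the support; since $\tau(\bbD)=1$, this forces $\tau(U\cap\bbD)>0$, and a set of positive measure must meet $\supp(\tau)$, so $U\cap\bbD\cap\supp(\tau)\neq\emptyset$. Thus every point of $\supp(\tau)$---including any boundary points it may contain---is a limit of points of $\bbD\cap\supp(\tau)$. Being a subset of $\bbC$, this set is separable, so I fix a countable set $D=\{w_1,w_2,\ldots\}\subseteq\bbD\cap\supp(\tau)$ that is dense in $\supp(\tau)$.

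Next I enumerate the finite words over $D$. The collection $\bigcup_{L\ge0}D^{2L+1}$ of odd-length tuples with entries in $D$ is countable; fixing an enumeration $v_1,v_2,\ldots$ of it, I form the master list $v_1;\,v_1,v_2;\,v_1,v_2,v_3;\,\ldots$ (concatenating the finite block $v_1,\ldots,v_s$ for $s=1,2,3,\ldots$), in which each word $v_j$ recurs in every block with $s\ge j$ and hence appears infinitely often. Concatenating the entries of all these words in order produces a one-sided sequence $\{z_n\}_{n\in\bbN}$ all of whose entries lie in $D\subseteq\bbD\cap\supp(\tau)$, as required. Finally I verify that an arbitrary $\{x_k\}_{k\in\bbZ}\in\supp(\tau)^{\bbZ}$ is a right limit. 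For each $m$, density of $D$ lets me choose $(d_{-m},\ldots,d_m)\in D^{2m+1}$ with $|d_k-x_k|<1/m$ for all $|k|\le m$. Because this word appears infinitely often in the concatenation, I can select an occurrence whose center index $n_m$ exceeds $n_{m-1}$, so that $n_m\to\infty$; at that occurrence $z_{n_m+k}=d_k$, whence $|z_{n_m+k}-x_k|<1/m$ for $|k|\le m$. Setting $\mcn=\{n_m\}$, for each fixed $k\in\bbZ$ and all $m\ge|k|$ we get $|z_{n_m+k}-x_k|<1/m$, which tends to $0$; this is exactly the statement that $\{z_n\}$ converges to $\{x_k\}$ in the right-limit sense through $\mcn$.

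The step requiring the most care is the last one, namely guaranteeing that the approximating blocks can be found with centers tending to infinity; this is precisely why I arrange for every finite word to recur infinitely often in the master list rather than listing each word only once. The only other point that genuinely uses the hypothesis $\tau(\bbD)=1$ is the density statement in the second paragraph, which is what allows interior support points to approximate targets lying on $\partial\bbD$.
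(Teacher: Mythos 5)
Your proof is correct, and it follows the same overall strategy as the paper's: construct a single sequence in which every relevant finite pattern occurs infinitely often as a contiguous block, then, given a target in $\supp(\tau)^{\bbZ}$, approximate its central segments by such patterns and select occurrences whose center indices tend to infinity. The difference lies in the combinatorial core. The paper fixes one countable sequence $S=\{s_1,s_2,\ldots\}$ in $\bbD\cap\supp(\tau)$ accumulating at every point of $\supp(\tau)$ and concatenates all permutations of $\{s_1,\ldots,s_n\}$ for $n=1,2,\ldots$; because a permutation contains each $s_i$ exactly once, the paper must first thin the approximating subsequences so that the $2k+1$ entries approximating $a_{-k},\ldots,a_k$ have pairwise distinct indices, and only then can that tuple be located as a contiguous block inside some permutation. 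Your enumeration of \emph{all} odd-length words over the countable dense set $D$, each repeated infinitely often via the nested blocks $v_1;\,v_1,v_2;\,v_1,v_2,v_3;\ldots$, removes this complication entirely: words may repeat entries, so the approximating tuple can be chosen directly by density, with no distinctness bookkeeping. You also make explicit why $\bbD\cap\supp(\tau)$ is dense in $\supp(\tau)$ (via $\tau(\bbD)=1$ and the fact that the complement of the support is $\tau$-null), a point the paper uses implicitly when it asserts the existence of $S$. Both routes yield the same conclusion; yours is somewhat more streamlined, while the paper's is more economical in that it manipulates a single sequence of points rather than the set of all finite words.
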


\begin{proof}
Let $S=\{s_1,s_2,\ldots\}$ be a countable sequence of points in $\bbD\cap\supp(\tau)$ that has all of $\supp(\tau)$ as limits of subsequences.  Let $S^{*}$ be the sequence obtained by setting the first element in the sequence equal to $s_1$, then following this with all permutations of $\{s_1,s_2\}$, then following this with all permutation of $\{s_1,s_2,s_3\}$, and so on.  For example, the beginning of the sequence $S^*$ is
\[
S^*=\{s_1,s_1,s_2,s_2,s_1,s_1,s_2,s_3,s_1,s_3,s_2,s_2,s_1,s_3,s_2,s_3,s_1,s_3,s_1,s_2,s_3,s_2,s_1,\ldots\}
\]
We claim that $S^*$ is our desired sequence.

To see this, let $\{a_j\}_{j\in\bbZ}$ be any element of $\supp(\tau)^{\bbZ}$ and let us write $S^*=\{u_1,u_2,\ldots\}$.  Since $S$ has all of $\supp(\tau)$ as limits of subsequences, we may for each $j\in\bbZ$ find a subsequence $\{m_{n,j}\}_{n\in\bbN}\subseteq\bbN$ such that
\[
\lim_{\nri}s_{m_{n,j}}=a_j.
\]
It is straightforward to see that we can thin these subsequences so that each natural number belongs to the subsequence $\{m_{n,j}\}_{n\in\bbN}$ for at most one integer $j$.
By construction, for every $k\in\bbN$, the $(2k+1)$-tuple $(s_{m_{k,-k}},s_{m_{k,-k+1}},\ldots,s_{m_{k,k}})$ appears infinitely often as a contiguous block in $S^*$.  So, we can build our desired subsequence $\mcn=\{n_1,n_2,\ldots\}$ by first setting $n_1$ equal to any $k_1$ where $u_{k_1}=s_{m_{1,0}}$.  For every $j\geq2$, we set $n_j$ equal to any $k_j>n_{j-1}$ such that
\[
u_{k_j-j+1}=s_{m_{j,-j+1}},\qquad\cdots,\qquad u_{k_j}=s_{m_{j,0}},\qquad\cdots,\qquad u_{k_j+j-1}=s_{m_{j,j-1}}.
\]
With this choice of $n_j$, we see that for any $\ell\in\bbZ$, we have
\[
\lim_{{\nri}\atop{n\in\mcn}}u_{n+\ell}=\lim_{j\rightarrow\infty}u_{n_j+\ell}=\lim_{j\rightarrow\infty}s_{m_{j,\ell}}=a_{\ell},
\]
which is what we wanted to show.
\end{proof}

\begin{lemma}\label{weakall}
Let $\tau$ be a probability measure satisfying $\tau(\bbD)=1$ and suppose $\{\alpha_n\}_{n\geq0}$ is a sequence of i.i.d. random variables with distribution $\tau$.  Then with probability one, all of $\supp(\tau)^{\bbZ}$ is a right limit of the sequence $\{\alpha_n\}_{n\geq0}$.
\end{lemma}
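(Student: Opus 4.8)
The plan is to reduce the statement about the random sequence $\{\alpha_n\}_{n\geq0}$ to the deterministic construction already carried out in Lemma \ref{dense}, and then invoke a Borel--Cantelli style argument to show that the i.i.d.\ sequence almost surely realizes every finite block that the deterministic sequence $S^*$ does. The key observation is that a bi-infinite sequence $\{a_j\}_{j\in\bbZ}\in\supp(\tau)^{\bbZ}$ is a right limit of $\{\alpha_n\}_{n\geq0}$ precisely when, for every $k\in\bbN$ and every $\ve>0$, the $(2k+1)$-tuple of values within distance $\ve$ of $(a_{-k},\ldots,a_k)$ appears as a contiguous block in $\{\alpha_n\}$ infinitely often. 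So it suffices to establish this block-recurrence property almost surely for a countable dense family of target blocks.

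First I would fix a countable dense subset $\{s_1,s_2,\ldots\}$ of $\supp(\tau)\cap\bbD$ as in Lemma \ref{dense}, and for each such $s_i$ and each rational $\ve>0$ introduce the open ball $B(s_i,\ve)$. The crucial probabilistic input is that $\tau(B(s_i,\ve))>0$ whenever $s_i\in\supp(\tau)$, by the definition of support. Then for a fixed finite tuple of centers $(s_{i_1},\ldots,s_{i_L})$ and a fixed $\ve$, the event that the consecutive coordinates $(\alpha_{m+1},\ldots,\alpha_{m+L})$ all land in the corresponding balls has probability $\prod_{\ell=1}^{L}\tau(B(s_{i_\ell},\ve))>0$, independently of $m$. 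Partitioning $\bbN$ into disjoint blocks of length $L$, the events ``block number $r$ matches'' are independent with a common positive probability, so the second Borel--Cantelli lemma (or equivalently $\sum_r p = \infty$ with independence) guarantees that this pattern occurs in infinitely many blocks with probability one, hence occurs as a contiguous sub-block infinitely often almost surely.

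Next I would take a countable intersection over all choices: over all finite tuples drawn from $\{s_i\}$ and all rational $\ve>0$. A countable intersection of probability-one events has probability one, so almost surely \emph{every} such finite tuple recurs within \emph{every} rational tolerance infinitely often. On this almost-sure event, given any target $\{a_j\}_{j\in\bbZ}\in\supp(\tau)^{\bbZ}$, I can approximate each $a_j$ to within $\ve$ by a chosen center $s_{i(j)}$ and, using the recurrence property together with a diagonal extraction exactly as in the final paragraph of the proof of Lemma \ref{dense}, build a subsequence $\mcn$ along which $\alpha_{n+\ell}\to a_\ell$ for every $\ell\in\bbZ$. This exhibits $\{a_j\}_{j\in\bbZ}$ as a right limit, which is what Lemma \ref{weakall} asserts.

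The main obstacle I anticipate is the order-of-quantifiers bookkeeping: the almost-sure event must be arranged so that it does \emph{not} depend on the particular target sequence $\{a_j\}$, since that target ranges over the uncountable set $\supp(\tau)^{\bbZ}$. The resolution is precisely the countable-dense-centers-plus-rational-tolerance device above, which replaces the uncountable family of targets by a countable family of block-recurrence events; once the probability-one event is fixed independently of $\{a_j\}$, the diagonal construction is purely deterministic and borrows directly from Lemma \ref{dense}, so no further probabilistic care is needed there. A secondary point to handle cleanly is independence: I would partition $\bbN$ into consecutive disjoint windows of the required length so that the matching events on distinct windows are genuinely independent, which lets me apply the divergence half of Borel--Cantelli without worrying about overlaps.
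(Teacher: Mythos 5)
Your proposal is correct, but it is organized differently from the paper's proof. The paper routes everything through Lemma \ref{dense}: it first constructs a single deterministic sequence $S^*$ whose right limits exhaust $\supp(\tau)^{\bbZ}$, and then the probabilistic step shows that almost surely there is a subsequence $\{n_j\}$ along which $\alpha_{n_j+k-1}\to u_k$ for every $k$, so that $\{\alpha_n\}_{n\geq0}$ inherits all right limits of $S^*$; the Borel--Cantelli application there uses events $A^{(j)}_{3jn}$ (the same disjoint-window independence device you propose) but with radii $y_n$ shrinking slowly enough that $\sum_n\prod_{k=1}^{j}\tau\bigl(B(u_k,y_n)\bigr)=\infty$, a choice that must be justified. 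You dispense with the universal sequence $S^*$ altogether: you characterize right limits by infinite recurrence of finite blocks, run Borel--Cantelli separately for each of the countably many pairs consisting of a finite tuple of centers and a rational tolerance --- where each window has a \emph{constant} success probability $p>0$, so divergence of the series is automatic --- and then finish with a countable intersection and a deterministic diagonal extraction. The paper's factorization buys a reusable combinatorial lemma (its real-line analogue, Lemma \ref{realall}, does the corresponding work in Theorem \ref{ttrandom}); yours buys a more elementary, self-contained argument in which the almost-sure event is manifestly fixed before the target sequence is chosen, which is exactly the quantifier issue you flag. One point you should still make explicit: your final approximation step needs the centers $\{s_i\}\subseteq\bbD\cap\supp(\tau)$ to be dense in all of $\supp(\tau)$ (targets $a_j$ may lie on $\partial\bbD$), which holds because $\tau(\bbD)=1$ forces $\supp(\tau)\cap\bbD$ to be dense in $\supp(\tau)$; this is the one ingredient of Lemma \ref{dense} that your argument genuinely retains.
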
  

\begin{proof}
Let $S^{*}=\{u_1,u_2,\ldots\}$ be the sequence whose existence is proven in Lemma \ref{dense}.  If $B(x,r)$ denotes the open ball with center $x$ and radius $r$ in the complex plane, then let $\{y_n\}_{n\in\bbN}$ be a sequence of positive real numbers converging to zero so that for every $j\in\bbN$
\[
\sum_{n=1}^{\infty}\prod_{k=1}^j\tau\left(B(u_k,y_n)\right)=\infty
\]
(it is straightforward to construct such a sequence).  Define the events $\{A_n^{(j)}\}_{n\in\bbN}$ by
\[
A_n^{(j)}=\left\{|\alpha_{n+k-1}-u_k|\leq y_n,\, k\in\{1,\ldots,j\}\right\},
\]
that is, $A_n^{(j)}$ is the event that $\{\alpha_n,\ldots,\alpha_{n+j-1}\}$ very closely resembles the beginning of the sequence $S^*$.  It is clear that for each $j\in\bbN$, the events $\{A_{3jn}^{(j)}\}_{n\in\bbN}$ are independent and
\[
\sum_{n=1}^{\infty}\bbP(A_{3jn}^{(j)})=\sum_{n=1}^{\infty}\prod_{k=1}^j\tau(B(u_k,y_n))=\infty.
\]
Therefore, the second Borel Cantelli Lemma (see \cite[Theorem 7.2.2]{Analysis1}) tells us that with probability $1$ we can find a subsequence $\mcn_j\subseteq\bbN$ such that
\[
\lim_{{\nri}\atop{n\in\mcn_j}}\alpha_{n+k-1}=u_k,\qquad k\in\{1,\ldots,j\}.
\]
It follows that we can construct a subsequence $\{n_k\}_{k=1}^{\infty}$ by choosing any $n_1\in\mcn_1$ and then choosing $n_j\in\mcn_j$ sufficiently large compared to $n_{j-1}$ for every $j\geq2$ so that
\begin{align}\label{ulim}
\lim_{j\rightarrow\infty}\alpha_{n_j+k-1}=u_k,\qquad k\in\bbN.
\end{align}
We conclude that with probability one, there is a subsequence $\{n_k\}_{k=1}^{\infty}$ so that (\ref{ulim}) holds.  However, it is straightforward to see that (\ref{ulim}) is sufficient to imply that every right limit of the sequence $S^*$ is also a right limit of the sequence $\{\alpha_n\}_{n\in\bbN}$.  By construction, the set of right limits of $S^*$ is all of $\supp(\tau)^{\bbZ}$, so the desired conclusion follows.
\end{proof}

\begin{proof}[Proof of Theorem \ref{trandom}:]
Suppose $\mu$ and $\nu$ are as in the statement of the theorem.  We begin with a simple observation: if $\mcn\subseteq\bbN$ is a subsequence such that $M_{\nu}$ converges to a right limit $X$ as $\nri$ through $\mcn$, then by Lemma \ref{weakall} there is almost surely a subsequence $\mcn'\subseteq\bbN$ such that $M_{\mu}$ converges to the same right limit $X$ as $\nri$ through $\mcn'$.  

Define
\[
\mcf(n,k,r):=\inf_{m>k}\left(\sup_{|z|>r}\left\{\left|\frac{p_{m-1}(z;\mu)}{p_{m}(z;\mu)}-\frac{p_{n-1}(z;\nu)}{p_{n}(z;\nu)}\right|\right\}\right).
\]
It suffices to show that for every $r>1$ and every subsequence $\{j_n\}_{n\in\bbN}\subseteq\bbN$ it holds that
\begin{align}\label{fshow}
\lim_{\nri}\mcf(n,j_n,r)=0.
\end{align}
Suppose for contradiction that there exists an $r_0>1$ and subsequences $\{k_n\}_{n\in\bbN},\{j_n\}_{n\in\bbN}\subseteq\bbN$ so that
\[
\lim_{\nri}\mcf(k_n,j_{k_n},r_0)=t>0.
\]
Take a subsequence $\{k_n'\}_{n\in\bbN}$ of $\{k_n\}_{n\in\bbN}$ so that $M_{\nu}$ converges to a right limit $X_0$ as $\nri$ through $\{k_n'\}_{n\in\bbN}$.  By our earlier observation, there is almost surely a subsequence $\{h_n\}_{n\in\bbN}\subseteq\bbN$ such that $M_{\mu}$ converges to the right limit $X_0$ as $\nri$ through $\{h_n\}_{n\in\bbN}$ and it is trivial to see that we may refine this subsequence so that $h_n>j_{k_n}$ for all $n\in\bbN$.  Therefore, by Corollary \ref{genrelrat} it is true that for all sufficiently large $n$ it holds that
\[
\mcf(k_n,j_{k_n},r_0)\leq\sup_{|z|>r_0}\left|\frac{p_{h_n-1}(z;\mu)}{p_{h_n}(z;\mu)}-\frac{p_{k_n-1}(z;\nu)}{p_{k_n}(z;\nu)}\right|<\frac{t}{2},
\]
which gives us the desired contradiction.
\end{proof}

\subsection{The Real Line Case}\label{randomoprl}

Suppose $\mu$ satisfies $\supp(\mu)\subseteq\bbR$.  In this case, it is well-known that the entries of the Bergman shift matrix can be expressed in terms of the Jacobi parameters $\{a_n,b_n\}_{n\geq1}$, where $a_n>0$ and $b_n\in\bbR$.  Indeed, the diagonal entries of the Bergman Shift matrix are the sequence $\{b_n\}_{n\in\bbN}$ and the off-diagonal elements are the sequence $\{a_n\}_{n\in\bbN}$.  Favard's Theorem (see \cite[Chapter 1]{OPUC1}) tells us that there is a one-to-one correspondence between bounded sequences of Jacobi parameters and non-trivial and compactly supported probability measures on the real line.  Therefore, one can consider random measures on the real line by considering random sequences of Jacobi parameters.  Our goal is to show that the orthonormal polynomials for a random measure almost surely exhibit a universal ratio asymptotic behavior.  The precise meaning of this statement depends on the random distribution from which we select our Jacobi parameters.

\vspace{2mm}

\noindent\textbf{Definition.}  If $\tau_1$ is a compactly supported probability measure on $(0,\infty)$ and $\tau_2$ is a compactly supported probability measure on $\bbR$, then the class $K(\tau_1,\tau_2)$ is the set of all probability measures on the real line whose Jacobi parameters $\{a_n,b_n\}_{n\in\bbN}$ satisfy
\[
a_n\in\supp(\tau_1),\qquad b_n\in\supp(\tau_2),\qquad n\in\bbN.
\]
The class $K^*(\tau_1,\tau_2)$ is the set of all bi-infinite tri-diagonal symmetric matrices having diagonal entries in $\supp(\tau_2)$ and off-diagonal entries in $\supp(\tau_1)$.

\vspace{2mm}

Now we are ready to state the analog of Theorem \ref{trandom} for measures on the real line.

\begin{theorem}\label{ttrandom}
Suppose  $\tau_1$ is a compactly supported probability measure on $(0,\infty)$ and $\tau_2$ is a compactly supported probability measure on $\bbR$.  Let $\mu$ be a probability measure on the real line chosen randomly by selecting the sequence of off-diagonal Jacobi parameters $\{a_n\}_{n\in\bbN}$ as i.i.d. random variables with distribution $\tau_1$ and diagonal Jacobi parameters $\{b_n\}_{n\in\bbN}$ as i.i.d. random variables with distribution $\tau_2$.  Then almost surely it is true that for every $\nu$ in the class $K(\tau_1,\tau_2)$, there exists a subsequence $\{m_n\}_{n=1}^{\infty}\subseteq\bbN$ such that
\[
\lim_{\nri}\left(\frac{p_{m_n-1}(z;\mu)}{p_{m_n}(z;\mu)}-\frac{p_{n-1}(z;\nu)}{p_{n}(z;\nu)}\right)=0,\qquad|z|>\max\{R_{\mu},R_{\nu}\}.
\]
\end{theorem}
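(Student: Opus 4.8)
The plan is to mirror the proof of Theorem~\ref{trandom}, adapting the two supporting lemmas to the present setting in which the relevant symbol is no longer a single Verblunsky coefficient but a pair $(a_n,b_n)$ of Jacobi parameters. The structural fact that makes the transfer possible is that when $\supp(\mu)\subseteq\bbR$ the Bergman Shift matrix $M_\mu$ is the tri-diagonal matrix whose diagonal is $\{b_n\}_{n\in\bbN}$ and whose off-diagonals are $\{a_n\}_{n\in\bbN}$; consequently the sequence of Jacobi parameters converges to a right limit through a subsequence $\mcn\subseteq\bbN$ if and only if $M_\mu$ converges to the corresponding right limit, an element of $K^*(\tau_1,\tau_2)$, through the same subsequence $\mcn$. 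Thus it suffices to control the right-limit behaviour of the parameter sequence.

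First I would prove the analog of Lemma~\ref{dense}: replacing the symbol space $\bbD\cap\supp(\tau)$ by the compact, hence separable, metric space $\supp(\tau_1)\times\supp(\tau_2)$, the identical combinatorial construction produces a sequence in $(\supp(\tau_1)\times\supp(\tau_2))^{\bbN}$ having all of $K^*(\tau_1,\tau_2)$ as right limits. Next I would prove the analog of Lemma~\ref{weakall}: since (as is standard) the families $\{a_n\}$ and $\{b_n\}$ are jointly independent, the pairs $\{(a_n,b_n)\}$ are i.i.d.\ with distribution $\tau_1\otimes\tau_2$, which assigns positive measure to every product $B(a,r)\times B(b,r)$ with $a\in\supp(\tau_1)$ and $b\in\supp(\tau_2)$; the same second Borel--Cantelli argument then shows that almost surely all of $K^*(\tau_1,\tau_2)$ occurs as a right limit of the random parameter sequence. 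Combining this with the identification above yields the key observation driving Theorem~\ref{trandom}: whenever $M_\nu$ approaches a right limit $X$ through a subsequence, almost surely $M_\mu$ approaches the same $X$ through some subsequence.

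With these ingredients in place, the final argument is verbatim that of Theorem~\ref{trandom}. Defining
\[
\mcf(n,k,r):=\inf_{m>k}\left(\sup_{|z|>r}\left|\frac{p_{m-1}(z;\mu)}{p_{m}(z;\mu)}-\frac{p_{n-1}(z;\nu)}{p_{n}(z;\nu)}\right|\right),
\]
it suffices, by the normal family property of the ratios $\{p_{n-1}(z;\nu)/p_n(z;\nu)\}_{n\in\bbN}$ on $\{z:|z|>R_\nu\}$, to show that $\lim_{\nri}\mcf(n,j_n,r)=0$ for every $r>\max\{R_\mu,R_\nu\}$ and every subsequence $\{j_n\}$. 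Assuming the contrary, I would pass to a subsequence along which $M_\nu$ approaches a right limit $X_0$, use the key observation to produce a matching subsequence along which $M_\mu$ approaches the same $X_0$, and then invoke Corollary~\ref{genrelrat} to force the two ratios to share the same limiting function, contradicting the assumed positive gap. I expect no serious obstacle here: only the implication (ii) implies (i) of Corollary~\ref{genrelrat} is used, and it holds without hypothesis~(\ref{kapo}); in any case (\ref{kapo}) is automatic in this setting, since compactness of $\supp(\tau_1)\subseteq(0,\infty)$ forces $a_n\geq c>0$ and hence $\liminf_{\nri}\kappa_n/\kappa_{n+1}\geq c>0$. The only step demanding genuine care is the product-measure Borel--Cantelli computation in the analog of Lemma~\ref{weakall}, where the shrinking radii must be chosen so that the series of $\tau_1\otimes\tau_2$-measures of the relevant product balls still diverges.
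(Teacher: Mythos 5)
Your proposal is correct, and it reaches the paper's conclusion by a genuinely different route at the one step the paper itself flags as the "only substantive modification," namely the analog of Lemma~\ref{dense}. The paper's Lemma~\ref{realall} keeps the diagonal and off-diagonal data as two \emph{separate} sequences $T^*$ and $S^*$, and must therefore solve an alignment problem: to realize an arbitrary element of $K^*(\tau_1,\tau_2)$ as a right limit, every pair consisting of a block of $s$-values and a block of $t$-values has to occur at the \emph{same} positions in the two sequences. This is arranged by a factorial-repetition device (each permutation of $\{s_1,\ldots,s_n\}$ is held fixed for $n!$ consecutive copies while $T^*$ cycles through all $n!$ permutations of $\{t_1,\ldots,t_n\}$). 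You bypass this entirely by treating $(a_n,b_n)$ as a single symbol in the compact metric space $\supp(\tau_1)\times\supp(\tau_2)$ and running the construction of Lemma~\ref{dense} verbatim on a countable dense sequence of pairs; right limits of the pair sequence in the product space are exactly the elements of $K^*(\tau_1,\tau_2)$, so no alignment issue ever arises. Your version of Lemma~\ref{weakall} then goes through because the pairs $(a_n,b_n)$ are i.i.d.\ with law $\tau_1\otimes\tau_2$, whose support is the full product, so the product balls $B(u_k,y_n)\times B(v_k,y_n)$ have positive measure --- note that this uses joint independence of the two parameter families, which is the intended reading of the theorem and is equally needed (implicitly) in the paper's Borel--Cantelli step. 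What your approach buys is brevity and generality: the same product-space trick would handle any fixed number of parameter sequences (e.g.\ banded matrices with several diagonals) with no new combinatorics, whereas the paper's explicit two-sequence construction makes the alignment mechanism visible but does not generalize as cleanly. Your remaining remarks are also accurate: the endgame is verbatim the proof of Theorem~\ref{trandom}, only the implication (ii)$\Rightarrow$(i) of Corollary~\ref{genrelrat} is used (which needs no hypothesis~(\ref{kapo})), and in any case (\ref{kapo}) holds automatically here since $\supp(\tau_1)$ is a compact subset of $(0,\infty)$, forcing $\kappa_n/\kappa_{n+1}=a_{n+1}\geq\inf\supp(\tau_1)>0$.
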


The proof of Theorem \ref{ttrandom} is very similar to the proof of Theorem \ref{trandom}.  The only substantive modification is in the analog of Lemma \ref{dense}, which we now provide.

\begin{lemma}\label{realall}
Suppose  $\tau_1$ is a compactly supported probability measure on $(0,\infty)$ and $\tau_2$ is a compactly supported probability measure on $\bbR$.  There exists a Jacobi matrix $J^*$ in the class $K(\tau_1,\tau_2)$ that has all of $K^*(\tau_1,\tau_2)$ as right limits.
\end{lemma}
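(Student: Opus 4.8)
The plan is to mimic the proof of Lemma \ref{dense} almost verbatim, with the compact metric space $\mathcal{K}:=\supp(\tau_1)\times\supp(\tau_2)$ playing the role that $\bbD\cap\supp(\tau)$ played there. Since $\supp(\tau_1)$ is a compact subset of $(0,\infty)$, we have $\inf\supp(\tau_1)>0$, so any sequence of parameters drawn from $\mathcal{K}$ gives the Jacobi parameters of a genuine bounded Jacobi matrix (off-diagonal entries bounded away from $0$, diagonal entries bounded) and hence, by Favard's Theorem, of a nontrivial compactly supported probability measure lying in $K(\tau_1,\tau_2)$. Conversely, every matrix in $K^*(\tau_1,\tau_2)$ is encoded by a bi-infinite sequence $\{(a_j,b_j)\}_{j\in\bbZ}\in\mathcal{K}^{\bbZ}$, where $b_j$ is the $j$-th diagonal entry and $a_j$ the adjacent off-diagonal entry.

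First I would choose a countable sequence $S=\{s_1,s_2,\ldots\}$ of points of $\mathcal{K}$ that has all of $\mathcal{K}$ as subsequential limits; this exists because $\mathcal{K}$, being a compact metric space, is separable. Then I would form the sequence $S^*=\{u_1,u_2,\ldots\}$ exactly as in Lemma \ref{dense}, by listing $s_1$, then all permutations of $\{s_1,s_2\}$, then all permutations of $\{s_1,s_2,s_3\}$, and so on. Finally I would let $J^*$ be the Jacobi matrix whose $n$-th parameter pair $(a_n,b_n)$ equals $u_n\in\mathcal{K}$. By the previous paragraph, $J^*$ lies in $K(\tau_1,\tau_2)$.

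The verification that $J^*$ has all of $K^*(\tau_1,\tau_2)$ as right limits is then \emph{word-for-word} the argument of Lemma \ref{dense}: given any $\{(a_j,b_j)\}_{j\in\bbZ}\in\mathcal{K}^{\bbZ}$, one uses the density of $S$ to find, for each $j$, a subsequence along which $S$ approaches $(a_j,b_j)$, thins these subsequences to be pairwise disjoint, and then uses the block structure of $S^*$ to build a single subsequence $\mcn\subseteq\bbN$ with $\lim_{n\in\mcn,\,\nri}u_{n+\ell}=(a_\ell,b_\ell)$ for every $\ell\in\bbZ$. Because the metric on $\mathcal{K}$ is the product metric, convergence of the pairs $u_{n+\ell}$ is equivalent to simultaneous convergence of the diagonal entries $b_{n+\ell}$ and the off-diagonal entries $a_{n+\ell}$; since the diagonal (resp.\ off-diagonal) entries of a Jacobi matrix are exactly the $b_n$ (resp.\ $a_n$), this says precisely that the bi-infinite tridiagonal matrix determined by $\{(a_j,b_j)\}_{j\in\bbZ}$ is a right limit of $J^*$ through $\mcn$. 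As this matrix was an arbitrary element of $K^*(\tau_1,\tau_2)$, the lemma follows.

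The one point that needs a word of care — and the only place the hypotheses on $\tau_1,\tau_2$ enter — is checking that $J^*$ really belongs to $K(\tau_1,\tau_2)$ and that each candidate right limit belongs to $K^*(\tau_1,\tau_2)$; both follow because $\supp(\tau_1)$ and $\supp(\tau_2)$ are closed, so all limits of parameters remain inside them, and because $\supp(\tau_1)\subseteq(0,\infty)$ is compact, so the off-diagonal entries are uniformly positive. No genuinely new difficulty arises beyond Lemma \ref{dense}; the main (mild) obstacle is simply to set up the identification between right limits of the parameter sequence in $\mathcal{K}^{\bbZ}$ and right limits of the associated tridiagonal matrices cleanly, which the product-metric observation handles.
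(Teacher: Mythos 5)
Your proposal is correct, and it diverges from the paper's proof at precisely the step that the paper itself identifies as the ``only substantive modification'' needed for the real-line case. The paper keeps the off-diagonal and diagonal data separate: it builds a sequence $S^*$ in which each permutation of $\{s_1,\ldots,s_n\}$ is repeated $n!$ times \emph{consecutively}, and a sequence $T^*$ in which the concatenation of \emph{all} $n!$ permutations of $\{t_1,\ldots,t_n\}$ is repeated $n!$ times. The point of this asymmetric repetition scheme is that the two stage-$n$ blocks have the same length and, when laid side by side, every pair (permutation of the $s_i$'s, permutation of the $t_i$'s) occurs aligned at identical positions; this alignment is exactly what is needed so that an arbitrary prescribed window of off-diagonal entries can be made to appear simultaneously with an arbitrary prescribed window of diagonal entries, and it is then cemented by choosing the subsequences $\{m_{n,j}\}$ and $\{q_{n,k}\}$ in tandem. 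You sidestep this combinatorial alignment problem entirely by permuting \emph{pairs} in the product space $\supp(\tau_1)\times\supp(\tau_2)$, so that the diagonal and off-diagonal entries travel together automatically, and the argument of Lemma \ref{dense} applies word for word (its proof uses nothing about $\bbD\cap\supp(\tau)$ beyond its being a subset of a metric space with a countable subset having the right closure). Your bookkeeping step --- that convergence of the centered windows of a tridiagonal matrix is equivalent to convergence, for each fixed $\ell\in\bbZ$, of the shifted parameter pairs $(a_{n+\ell},b_{n+\ell})$ in the product metric, and that elements of $K^*(\tau_1,\tau_2)$ are exactly the bi-infinite pair sequences in $(\supp(\tau_1)\times\supp(\tau_2))^{\bbZ}$ --- is the correct and complete justification for transferring right limits of the pair sequence to right limits of $J^*$; likewise your use of $\inf\supp(\tau_1)>0$ and the closedness of the supports to place $J^*$ in $K(\tau_1,\tau_2)$ matches the paper's implicit use of the same facts. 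What your route buys is economy: one sequence instead of two, no $n!$-repetition device, and direct reuse of the earlier lemma. What the paper's route buys is an explicit construction phrased entirely in terms of the two original parameter sequences, but it is genuinely more intricate, and nothing in the later proof of Theorem \ref{ttrandom} requires that extra structure.
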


\begin{proof}
Let $S=\{s_1,s_2,\ldots\}$ be a countable set in $\supp(\tau_1)$ that has all of $\supp(\tau_1)$ as limits of subsequences and let $T=\{t_1,t_2,\ldots\}$ be a countable set in $\supp(\tau_2)$ that has all of $\supp(\tau_2)$ as limits of subsequences.  Let $S^*$ be the sequence obtained by setting the first element in the sequence equal to $s_1$, then following this with $2$ consecutive appearances of each permutation of $\{s_1,s_2\}$.  The next elements of $S^*$ will be $6$ consecutive appearances of each permutation of $\{s_1,s_2,s_3\}$.  This pattern is repeated by including $n!$ appearances of each permutation of $\{s_1,\ldots,s_n\}$.  For example, the beginning of the sequence $S^*$ is
\[
S^*=\{s_1,s_1,s_2,s_1,s_2,s_2,s_1,s_2,s_1,s_1,s_2,s_3,s_1,s_2,s_3,s_1,s_2,s_3,s_1,s_2,s_3,s_1,s_2,s_3,s_1,s_2,s_3,\ldots\}.
\]
Let $T^*$ be the sequence obtained by setting the first element in the sequence equal to $t_1$, then following this with every possible permutation of $\{t_1,t_2\}$ repeated $2$ times, then following this with every possible permutation of $\{t_1,t_2,t_3\}$ repeated $6$ times, and so on.  At each step of the construction, we will add on every possible permutation of $\{t_1,\ldots,t_n\}$ repeated $n!$ times.  For example, the beginning of the sequence $T^*$ is
\[
T^*=\{t_1,t_1,t_2,t_2,t_1,t_1,t_2,t_2,t_1,t_1,t_2,t_3,t_1,t_3,t_2,t_2,t_1,t_3,t_2,t_3,t_1,t_3,t_1,t_2,t_3,t_2,t_1\ldots\}.
\]
We claim that we may set $J^*$ equal to the matrix with the sequence $T^*$  on the diagonal and $S^*$ on the off-diagonals.

To see this, let $\{a_j\}_{j\in\bbZ}$ be any element of $\supp(\tau_1)^{\bbZ}$ and let $\{b_j\}_{j\in\bbZ}$ be any element of $\supp(\tau_2)^{\bbZ}$.  Let us also write $S^*=\{u_n\}_{n\in\bbN}$ and $T^*=\{v_n\}_{n\in\bbN}$.  By the properties of $S$ and $T$, we may for each $j,k\in\bbZ$ find subsequences $\{m_{n,j}\}_{n\in\bbN},\{q_{n,k}\}_{n\in\bbN}\subseteq\bbN$ such that
\[
\lim_{\nri}s_{m_{n,j}}=a_j,\qquad\qquad\lim_{\nri}t_{q_{n,k}}=b_k.
\]
It is straightforward to see that we can thin these subsequences so that each natural number belongs to the subsequence $\{m_{n,j}\}_{n\in\bbN}$ for at most one integer $j$ and to the subsequence $\{q_{n,k}\}_{n\in\bbN}$ for at most one integer $k$.

By construction, for every $k\in\bbN$, the $(2k+1)$-tuple $(s_{m_{k,-k}},s_{m_{k,-k+1}},\ldots,s_{m_{k,k}})$ appears infinitely often as a contiguous block in $S^*$ and at the same position in $S^*$ as the block $(t_{q_{k,-k}},t_{q_{k,-k+1}},\ldots,t_{q_{k,k}})$ in $T^*$.  So, we can build our desired subsequence $\mcn=\{n_1,n_2,\ldots\}$ by first setting $n_1$ equal to any $\ell_1$ where $u_{\ell_1}=s_{m_{1,0}}$ and $v_{\ell_1}=t_{q_{1,0}}$.  For every $j\geq2$, we set $n_j$ equal to any $\ell_j>n_{j-1}$ such that
\begin{align*}
u_{\ell_j-j+1}&=s_{m_{j,-j+1}},\qquad\cdots,\qquad u_{\ell_j}=s_{m_{j,0}},\qquad\cdots,\qquad u_{\ell_j+j-1}=s_{m_{j,j-1}}\\
v_{\ell_j-j+1}&=t_{q_{j,-j+1}},\qquad\cdots,\qquad v_{\ell_j}=t_{q_{j,0}},\qquad\cdots,\qquad v_{\ell_j+j-1}=t_{q_{j,j-1}}.
\end{align*}
With this choice of $n_j$, we see that for any $r\in\bbZ$, we have
\begin{align*}
\lim_{{\nri}\atop{n\in\mcn}}u_{n+r}&=\lim_{\jri}u_{n_j+r}=\lim_{\jri}s_{m_{j,r}}=a_r\\
\lim_{{\nri}\atop{n\in\mcn}}v_{n+r}&=\lim_{\jri}v_{n_j+r}=\lim_{\jri}t_{q_{j,r}}=b_r,
\end{align*}
as desired.
\end{proof}

With Lemma \ref{realall} in hand, it is now a simple matter to adapt the proof of Theorem \ref{trandom} to complete the proof of Theorem \ref{ttrandom}, so we omit the details.

\vspace{2mm}

In the next section, we will consider some additional examples that highlight some important applications of the results from Section \ref{intro}.

\section{Further Examples}\label{examp}

\subsection{Example: Alexandrov Measures on the Unit Circle}\label{alex}

Suppose $\mu$ is a probability measure supported on $\partial\bbD$ with Verblunsky coefficients $\{\alpha_n\}_{n=0}^{\infty}$.  For every $\lambda\in\partial\bbD$, one can consider the measure $\mu_{\lambda}$, whose Verblunsky coefficients are related to those of $\mu$ by
\[
\alpha_n(\mu_{\lambda})=\lambda\alpha_n(\mu),\qquad\qquad\qquad n\in\{0,1,2,3,\ldots\}.
\]
The family of measures $\{\mu_{\lambda}\}_{\lambda\in\partial\bbD}$ is called the family of \textit{Alexandrov Measures} for the measure $\mu$ (see \cite{OPUC1}).

If $\lambda_1$ and $\lambda_2$ are distinct complex numbers in the unit circle, then the relationship between $\mu_{\lambda_1}$ and $\mu_{\lambda_2}$ is highly non-trivial.  Indeed, it is well-known that these two measures need not be mutually absolutely continuous (see \cite[Section 10.3]{OPUC2}).  However, the structure of the Bergman Shift matrix reveals that $M_{\mu_{\lambda_1}}$ and $M_{\mu_{\lambda_2}}$ differ only in the first row, and thus the unique right limit of their difference is zero.  Indeed, the formulas in \cite[Section 4.1]{OPUC1} or \cite[Equation 28]{GNVA} tell us that
\[
(M_{\mu})_{ij}=\begin{cases}
-\alpha_{i-2}\bar{\alpha}_{j-1}\prod_{k=i-1}^{j-2}\sqrt{1-|\alpha_k|^2} & \quad i\leq j,\\
\sqrt{1-|\alpha_{j-1}|^2} & \quad i=j+1,
\end{cases}
\]
where we set $\alpha_{-1}=-1$ and all unspecified entries equal to zero.  It follows at once from Corollary \ref{genrelrat} that the orthonormal polynomials for $\mu_{\lambda_1}$ and $\mu_{\lambda_2}$ exhibit normalized relative ratio asymptotics.  We remark that this fact could also be deduced from \cite[Theorem 1.5]{GolZlat}.

\subsection{Example: Coefficient Stripping}

If $\{\alpha_0,\alpha_1,\ldots\}$ is a sequence of Verblunsky coefficients for a measure $\mu$ supported on $\partial\bbD$, then we can associate to it the measure $\mu^1$, which has Verblunsky coefficient sequence $\{\alpha_1,\alpha_2,\ldots\}$ and is called the \textit{once stripped} measure (see \cite[Section 3.4]{OPUC1}).  Similarly, for every $k\in\bbN$ the measure $\mu^k$ is defined as the measure corresponding to the Verblunsky coefficient sequence $\{\alpha_k,\alpha_{k+1},\ldots\}$.  One often refers to the polynomials corresponding to the measure $\mu^k$ as the $k^{th}$ associated polynomials (see \cite{VA91}).  It is easy to see that if $M_{\mu}$ approaches a right limit as $\nri$ through $\mcn\subseteq\bbN$, then $M_{\mu^k}$ approaches that same right limit as $\nri$ through $\mcn-k$.  It follows from Corollary \ref{genrelrat} that
\[
\lim_{\nri}\left[\frac{p_{n-k-1}(z;\mu^k)}{p_{n-k}(z;\mu^k)}-\frac{p_{n-1}(z;\mu)}{p_{n}(z;\mu)}\right]=0,\qquad\qquad|z|>1.
\]
A similar result holds for stripped orthogonal polynomials on the real line.

\subsection{Example: Measures on the Real Line}

In this example, we will revisit a result proven by Nevai and Van Assche in \cite{NVA92}.  Let us consider the case in which the measures of orthogonality $\mu$ and $\nu$ are each supported on (perhaps different) compact subsets of the real line.  In this case, the Bergman Shift matrix $M_{\mu}$ is symmetric and is zero away from its three main diagonals (and similarly for $M_{\nu}$).  If we label the diagonal elements of $M_{\mu}$ as $\{b_n(\mu)\}_{n\in\bbN}$ and the off diagonal elements as $\{a_n(\mu)\}_{n\in\bbN}$, then the hypothesis (\ref{kapo}) is equivalent to the condition $\inf_{n\in\bbN} a_n(\mu)>0$.  Corollary \ref{genrelrat} then implies that if $\inf_{n\in\bbN}a_n(\mu)a_n(\nu)>0$, then $\mu$ and $\nu$ exhibit normalized relative ratio asymptotics if and only if $M_{\mu}-M_{\nu}$ is compact, which in turn implies that the essential support of $\mu$ and $\nu$ is the same (this is by Weyl's Theorem; see \cite[Theorem S.13]{RS1}).

\vspace{2mm}

\subsection{Example: Degenerate Cases}

If the hypothesis (\ref{kapo}) is removed, then conclusion of Corollary \ref{genrelrat} fails because it is possible that both $\mu$ and $\nu$ exhibit normalized ratio asymptotics with limit function $0$, but the matrix $M_{\mu}-M_{\nu}$ has a non-zero right limit.  To make this more concrete, we appeal to measures on the unit circle.

Define the measures $\mu$ and $\nu$ through their Verblunsky coefficients by
\[
\alpha_n(\mu)=1-\frac{1}{n+2},\qquad\alpha_n(\nu)=\left(1-\frac{1}{n+2}\right)e^{in},\qquad n\geq0.
\]
A similar pair of measures provided an illustrative example in \cite{RatioHessI}.  It is well-known that $\kappa_n(\mu)\kappa_{n+1}(\mu)^{-1}=\sqrt{1-|\alpha_n(\mu)|^2}$ (see \cite[Equation 1.5.22]{OPUC1}) and similarly for $\nu$.  Since $\{P_{n-1}(z;\mu)P_{n}(z;\mu)^{-1}\}_{n\in\bbN}$ is a normal family on $\{z:|z|>1\}$, we have
\[
\lim_{\nri}\frac{p_{n-1}(z;\mu)}{p_{n}(z;\mu)}=0=\lim_{\nri}\frac{p_{n-1}(z;\nu)}{p_{n}(z;\nu)},\qquad|z|>1,
\]
so $\mu$ and $\nu$ exhibit normalized relative ratio asymptotics.  However, by appealing to the formulas in Example \ref{alex}, we see that when $n>1$ we have
\[
(M_{\mu})_{n,n}=-\left(1-\frac{1}{n+1}\right)\left(1-\frac{1}{n}\right),\qquad (M_{\nu})_{n,n}=-\left(1-\frac{1}{n+1}\right)\left(1-\frac{1}{n}\right)e^{-i}.
\]
From this it follows easily that $0$ is not the unique right limit of $M_{\mu}-M_{\nu}$, and we see that the condition (\ref{kapo}) is necessary in the statement of Corollary \ref{genrelrat}.  Furthermore, the measure $\nu$ shows that (\ref{kapo}) is an essential assumption in Theorem \ref{iffrlim}.

\vspace{4mm}

\end{document}